\definecolor{mygray}{gray}{0.85}
\newcommand{\mrm}[1]{\mathrm{#1}}
\def\subsection{\@startsection{subsection}{3}%
  \z@{.5\linespacing\@plus.7\linespacing}{.3\linespacing}%
  {\bfseries\centering}}
\def\subsubsection{\@startsection{subsubsection}{3}%
  \z@{.5\linespacing\@plus.7\linespacing}{.3\linespacing}%
  {\centering}}
\def\myfnt{\ifx\protect\@typeset@protect\expandafter\footnote\else\expandafter\@gobble\fi}
\newtheorem{theorem}{Theorem}[section]
\newtheorem{corollary}[theorem]{Corollary}
\newtheorem{definition}[theorem]{Definition}
\newtheorem{lemma}[theorem]{Lemma}
\newtheorem{question}[theorem]{Question}
\newtheorem{fact}[theorem]{Fact}
\newtheorem{remark}[theorem]{Remark}
\newtheorem{notation}[theorem]{Notation}
\newtheorem{conclusion}[theorem]{Conclusion}
\newtheorem{proviso}[theorem]{Proviso}
\newtheorem{conjecture}[theorem]{Conjecture}
\newcounter{claimcounter}
\numberwithin{claimcounter}{theorem}
\newcommand{\pureindep}[1][]{%
  \mathrel{
    \mathop{
      \vcenter{
        \hbox{\oalign{\noalign{\kern-.3ex}\hfil$\vert$\hfil\cr
              \noalign{\kern-.7ex}
              $\smile$\cr\noalign{\kern-.3ex}}}
      }
    }\displaylimits_{#1}
  }
}
\begin{document}
%%%%%%%%%%%%%%%%%%

\begin{abstract} 
We prove that if $A$ is a computable Hopfian finitely presented structure, then $A$ has a computable $d$-$\Sigma_2$ Scott sentence if and only if the weak Whitehead problem for $A$ is decidable. 
We use this to infer that every hyperbolic group as well as any polycyclic-by-finite group has a computable \mbox{$d$-$\Sigma_2$} Scott sentence, thus covering two main classes of finitely presented groups. Our proof also implies that every weakly Hopfian finitely presented group is strongly defined by its $\exists^+$-types, \mbox{a question which arose in a  different context.}
%We prove that every finitely presented structure $A$ has a $d$-$\Sigma_2$ Scott sentence, and that if $A$ is a computable Hopfian finitely presented structure with computable Whitehead problem, then $A$ has a computable  $d$-$\Sigma_2$ Scott sentence. This solves an open question in computable model theory. Our theorem also has a number of concrete applications, in particular it shows that every toral relatively hyperbolic group has a computable $d$-$\Sigma_2$ Scott sentences. Our proof also implies that every finitely presented group is strongly defined by its $\exists^+$-types, a question which arose in a completely different context.
\end{abstract}

\title[{Computable Scott sentences and the Whitehead problem}]{Computable Scott sentences and the weak Whitehead problem for finitely presented groups}

\thanks{The author was partially supported by project PRIN 2022 ``Models, sets and classifications", prot. 2022TECZJA. We thank M. Bridson, J. Delgado, V. Guirardel, A. Martino and E. Ventura for useful discussions surrounding Conjecture~\ref{martin_theorem} and the algorithmic problems connected to it. We also thank D. Carolillo for useful comments \mbox{on an earlier version of this note}}

\author{Gianluca Paolini}
\address{Department of Mathematics ``Giuseppe Peano'', University of Torino, Italy.}
\email{gianluca.paolini@unito.it}

%\author{??????}
%\address{...}

\date{\today}
\maketitle

\section{Introduction}

	In \cite{scott} Scott proved that every countable structure is characterized up to isomorphism by a sentence in $\mathfrak{L}_{\omega_1, \omega}$. Recently, there has been considerable interest \cite{alvir, knight1, trainor3, trainor2, trainor1, ho, knight2} in understanding how simple such a sentence can been chosen, in the sense of the following stratification of $\mathfrak{L}_{\omega_1, \omega}$ from {\em computable model theory}:
	\begin{enumerate}[(1)]
	\item $\varphi(\bar{x})$ is computable $\Pi_0$ and computable $\Sigma_0$ if it is finitary quantifier-free; 
	\item For an ordinal (resp. a computable ordinal) $\alpha >0$:
	\begin{enumerate}[(2.1)]
	\item $\varphi(\bar{x})$ is $\Sigma_\alpha$ (resp. computable $\Sigma_\alpha$) if it is a disjunction (resp. a computably enumerable disjunction) of formulas of the form $\exists\bar{y}\psi(\bar{x}, \bar{y})$, where $\psi$ is a $\Pi_\beta$-formula (resp. computable $\Pi_\beta$-formula) for some $\beta < \alpha$;
\end{enumerate}
	\begin{enumerate}[(2.2)]
	\item $\varphi(\bar{x})$ is $\Pi_\alpha$ (resp. computable $\Pi_\alpha$) if it is a conjunction (resp. a computably enumerable conjunction) of formulas of the form $\forall\bar{y}\psi(\bar{x}, \bar{y})$, where $\psi$ is a $\Sigma_\beta$-formula (resp. computable $\Sigma_\beta$-formula) for some $\beta < \alpha$;
\end{enumerate}
	\begin{enumerate}[(2.3)]
	\item $\varphi(\bar{x})$ is $d$-$\Sigma_\alpha$ if it is a conjunction of a $\Sigma_\alpha$-formula and a $\Pi_\alpha$-formula.
\end{enumerate}\end{enumerate}

	For a general overview of the motivation, history and literature surrounding the topic of optimal Scott sentences we refer the reader to the excellent survey \cite{trainor3}. In this paper we deal exclusively with optimal (computable) Scott sentences for finitely generated structures, and in particular for finitely presented groups; this topic has been central to the area. We shortly review the main progress in this direction. In \cite{knight2} it was proved that every finitely generated structure has a $\Sigma_3$ Scott sentence. In the same paper it was proved that several familiar finitely generated structures (e.g. free groups of finite rank, free abelian groups of finite rank, the infinite dihedral group) actually have a $d$-$\Sigma_2$ Scott sentence, and in fact a computable one. This led the authors to wonder if indeed every finitely generated group has a $d$-$\Sigma_2$ Scott sentence. This was answered negatively in \cite{trainor1} using methods from combinatorial group theory. In \cite{paolini} we isolated some general sufficient conditions for the existence of a computable $d$-$\Sigma_2$ Scott sentence, thus deducing e.g. that right-angled Coxeter groups of finite rank have a computable $d$-$\Sigma_2$ Scott sentence.
	Many questions on optimal Scott sentences for finitely generated structures are still open (see in particular \cite{trainor3, trainor2} for an overview). The following two questions appear to be the most \mbox{important in the area (cf. \cite[Questions~1 and 3]{trainor2} and references therein):} %In \cite{paolini} we gave a positive solution to Question~\ref{the_question} under the assumption of quasi-Hopfianity.
	
	\begin{question}\label{first_question} Is there a finitely presented group with no $d$-$\Sigma_2$ Scott sentence?
\end{question}
	
	\begin{question}\label{second_question} Is there a computable finitely presented group with a $d$-$\Sigma_2$ Scott sentence but no computable $d$-$\Sigma_2$ Scott sentence?
\end{question}

	For the rest of this introduction, we will shorten finitely presented as f.p. Concerning the first question, in \cite{trainor2} Harrison-Trainor gave a purely group theoretic characterization of the f.p. groups without a $d$-$\Sigma_2$ Scott sentence; despite this the question remains wide open at the moment. %In the definition below we will reformulate his condition using the terminology of P.M. Neumann \cite{neumann}, in the hope to make this more familiar to group theorists.
	Our paper deals instead with Question~\ref{second_question}. Concerning the existence of {\em computable} Scott sentences for f.p. (or more generally finitely generated) groups little appears to be known (cf. \cite[pg. 12]{trainor3}). An important conceptual result is in \cite{trainor2}, in this paper Harrison-Trainor constructs a finitely generated module with a $d$-$\Sigma_2$ Scott sentence but no computable $d$-$\Sigma_2$ Scott sentence. From this, using the universality among finitely generated structures of finitely generated groups \cite{trainor1}, Harrison-Trainor infers the abstract existence of a finitely generated group with a $d$-$\Sigma_2$ Scott sentence but no computable $d$-$\Sigma_2$ Scott sentence, but no explicit example of such a group can be exhibited using his argument. Furthermore, the argument of Harrison-Trainor from \cite{trainor2} says nothing on finite presentability. The only known general results that we are aware of come from our paper \cite{paolini}, already mentioned at the introduction, but the condition isolated there, although useful for some concrete applications, appeared to be of a rather technical nature. In this paper we show that, under the assumption of Hopfianity, the existence of a computable $d$-$\Sigma_2$ Scott sentence is equivalent to a weakening of a well-known algorithmic problem from combinatorial group theory: the {\em Whitehead problem}. This problem asks about the decidability of determining whether two tuples in a f.p. group are automorphic. The problem was first solved for free groups by Whitehead in \cite{whitehead_algo} (hence the name). There are now several important results of this kind, see in particular references \cite{orbit, the_hammer}. We define the {\em weak Whitehead problem}.
	
	\begin{definition}\label{def_Whitehead} Given a f.p. computable structure $A$ we say that $A$ has decidable weak Whitehead problem if for every finite generating tuple $\bar{a}$ of $A$ we have that the set $\{ \alpha(\bar{a}) : \alpha \in \mrm{Aut}(A) \}$ is a computable subset of $A^{n}$, where $n$ is the length of $\bar{a}$.
\end{definition}

%	We shortly explain the history of the problem. In \cite{knight2} it was proved that every finitely generated structure has a $\Sigma_3$ Scott sentence and in the same paper it was proved that several familiar finitely generated structures (free groups of finite rank, free abelian groups of finite rank, the infinite dihedral group) actually have a $d$-$\Sigma_2$ Scott sentence. This lead the authors to wonder if indeed every finitely generated group had a $d$-$\Sigma_2$ Scott sentence. This was answered negatively in \cite{trainor1} using a very ingenious and interesting construction. 

	Our main result is the following characterization of the existence of a computable $d$-$\Sigma_2$ Scott sentence for a finitely presented structure (in a finite language).
	
	\begin{theorem}\label{main_theorem} 
	Let $A$ be a computable Hopfian f.p. structure. Then $A$ has a computable $d$-$\Sigma_2$ Scott sentence iff the weak Whitehead problem for $A$ is decidable.
\end{theorem}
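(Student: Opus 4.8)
The plan is to distil from finite presentation and Hopfianity a uniform way of separating, by a positive--existential formula, any tuple outside the automorphism orbit $O_{\bar a}=\{\alpha(\bar a):\alpha\in\mathrm{Aut}(A)\}$ of a generating tuple $\bar a$ from $\bar a$ itself, and then to feed this into the (by now standard) translation between computable $d$-$\Sigma_2$ Scott sentences of finitely generated structures and computable $\Pi_1$-definability of $O_{\bar a}$. Throughout I fix a finite generating tuple $\bar a$ of length $n$ and a finite presentation of $A$ on $\bar a$ with atomic defining relations $\rho_1,\dots,\rho_m$, so that a tuple $\bar b$ in a structure $B$ satisfies $\bigwedge_{i\leq m}\rho_i(\bar b)$ exactly when $\bar a\mapsto\bar b$ extends to a homomorphism $A\to\langle\bar b\rangle_B$; I will use that $A$ is then finitely presented on every finite generating tuple and that, $A$ being computable with $\bar a$ generating, the atomic diagram of $\bar a$ is decidable and each element of $A$ is found, by search, as a term in $\bar a$. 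The first step is the key lemma: \emph{if $\bar c=\bar t(\bar a)\notin O_{\bar a}$, then the finitary formula}
\[
\phi_{\bar c}(\bar x)\ :=\ \exists\bar y\,\Bigl(\textstyle\bigwedge_{i\leq m}\rho_i(\bar y)\ \wedge\ \bigwedge_{k\leq n}x_k=t_k(\bar y)\Bigr)
\]
\emph{--- which is $\exists^{+}$ when $A$ is a group --- holds of $\bar c$ in $A$ but fails of $\bar a$.} Indeed $\bar y=\bar a$ witnesses $\phi_{\bar c}(\bar c)$, while a witness $\bar d$ for $\phi_{\bar c}(\bar a)$ would satisfy the defining relations, hence induce an endomorphism $g$ of $A$ with $g(\bar a)=\bar d$, whence $\bar a=\bar t(\bar d)=g(\bar t(\bar a))=g(\bar c)$ forces $g$ onto, so $g$ is an automorphism by Hopfianity and $\bar c=g^{-1}(\bar a)\in O_{\bar a}$, a contradiction. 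Finite presentation enters exactly for the finitary character of $\phi_{\bar c}$ and Hopfianity exactly to kill this spurious witness; the same computation shows a weakly Hopfian finitely presented group is strongly defined by its $\exists^{+}$-types, which is the side statement of the abstract.

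For the implication ``decidable weak Whitehead problem $\Rightarrow$ computable $d$-$\Sigma_2$ Scott sentence'', I would use that $O_{\bar a}$ is a computable subset of $A^n$, enumerate $A^n\setminus O_{\bar a}$ as $\bar c_1,\bar c_2,\dots$, compute for each $\bar c_j$ a term-tuple $\bar t_j$ with $\bar t_j(\bar a)=\bar c_j$ and hence the formula $\phi_{\bar c_j}$, and form the computable $\Pi_1$ formula
\[
\theta(\bar x)\ :=\ \bigwedge_{j}\neg\phi_{\bar c_j}(\bar x)\ \wedge\ \bigwedge\{\psi(\bar x):\psi\ \text{atomic or negated atomic},\ A\models\psi(\bar a)\};
\]
the key lemma and the $\mathrm{Aut}(A)$-invariance of the $\phi_{\bar c_j}$ give that the realizations of $\theta$ in $A$ are exactly $O_{\bar a}$. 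The Scott sentence is then
\[
\Psi\ :=\ \exists\bar x\,\theta(\bar x)\ \wedge\ \forall\bar x\,\Bigl(\theta(\bar x)\rightarrow\forall z\,\textstyle\bigvee_{t\ \text{term}}z=t(\bar x)\Bigr),
\]
a computable $d$-$\Sigma_2$ sentence. It holds of $A$ (the witness is $\bar a$, and any realization of $\theta$ in $A$ lies in $O_{\bar a}$, hence generates $A$), and if $B\models\Psi$ then a witness $\bar b$ of the first conjunct generates $B$ by the second conjunct, while the diagram part of $\theta$ makes $\bar a\mapsto\bar b$ extend to a homomorphism $A\to B$ that is onto (as $\bar b$ generates) and injective (as $\theta$ retains every inequality true of $\bar a$), hence an isomorphism; so $\Psi$ is a Scott sentence for $A$.

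For the converse, ``computable $d$-$\Sigma_2$ Scott sentence $\Rightarrow$ decidable weak Whitehead problem'', I would first note that $O_{\bar a}$ is c.e.\ for \emph{every} finite generating tuple: by Hopfianity and finite presentation $O_{\bar a}=\{\bar b\in A^n: A\models\bigwedge_{i\leq m}\rho_i(\bar b)\ \text{and}\ \bar a\in\langle\bar b\rangle\}$ (a tuple satisfying the relations induces an endomorphism, which is an automorphism iff onto iff $\bar a\in\langle\bar b\rangle$), an intersection of a decidable set with a $\Sigma^0_1$ set. Next, since a computable finitely generated structure with a computable $d$-$\Sigma_2$ Scott sentence has $O_{\bar a}$ equal to the realization set in $A$ of a computable $\Pi_1$ formula (\cite{knight2,trainor2,paolini}), and such a realization set is $\Pi^0_1$ in a computable structure, $O_{\bar a}$ is computable; and for an arbitrary finite generating tuple $\bar a'$, writing $\bar a=\bar s(\bar a')$, Hopfianity gives $O_{\bar a'}=\{\bar d:\bar a'\mapsto\bar d\ \text{extends to an endomorphism of}\ A\ \text{and}\ \bar s(\bar d)\in O_{\bar a}\}$, whose complement is the union of a $\Sigma^0_1$ set and (as $O_{\bar a}$ is computable) a decidable set, so $O_{\bar a'}$ is both c.e.\ and co-c.e., hence computable.

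The main obstacle will be the key lemma in its effective, uniform form: the $\phi_{\bar c}$ must be genuinely finitary, which is where finite presentation is indispensable; they must be producible uniformly from an enumeration of $A^n\setminus O_{\bar a}$; and Hopfianity must be used exactly as the hypothesis annihilating the spurious witnesses, including when $A$ admits proper self-embeddings. The remaining work --- tracking the $\mathfrak{L}_{\omega_1\omega}$-complexity of the formulas above and quoting the dictionary between optimal Scott sentences and definability of orbits of generating tuples --- is routine.
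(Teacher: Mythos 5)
Your proposal is correct and takes essentially the same route as the paper: your key lemma is exactly the content of Lemmas~\ref{crucial_crucial_lemma}--\ref{the_Hopfian_lemma} (Hopfianity kills the spurious witness and turns the induced endomorphism into an automorphism, giving a computable $\Pi_1$ definition of the orbit of $\bar{a}$), and your argument that the orbit is c.e.\ by searching for terms expressing $\bar{a}$ in $\bar{b}$ is the paper's step $(\star_2)$ in Lemma~\ref{final_lemma}. The only cosmetic difference is that you construct the computable $d$-$\Sigma_2$ Scott sentence explicitly (modulo the routine prenexing of the $\forall z$ past $\neg\theta$ to keep the second conjunct $\Pi_2$), whereas the paper cites the Alvir--Knight--McCoy equivalence (Fact~\ref{the_equivalence_fact}) in both directions.
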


	One of the virtues of Theorem~\ref{main_theorem} is that it allows us to use involved results in group theory to gain knowledge on the existence of computable $d$-$\Sigma_2$ Scott sentences. In particular, \ref{main_theorem} allows us to to settle the questions of existence of computable $d$-$\Sigma_2$ Scott sentences for two classes of finitely presented groups which have been at center of the attention among group theorists in the last decades, namely hyperbolic groups (see e.g. \cite{bridson}) and  polycyclic-by-finite groups (see e.g. \cite{segal}).
% that torsion-free hyperbolic groups have a computable $d$-$\Sigma_2$ Scott sentence.

	\begin{corollary}\label{second_corollary} Hyperbolic groups have a computable $d$-$\Sigma_2$ Scott sentence.
\end{corollary}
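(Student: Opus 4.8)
The plan is to deduce the corollary from Theorem~\ref{main_theorem}, so I must check its three hypotheses for an arbitrary word-hyperbolic group $G$ and then verify that the weak Whitehead problem for $G$ is decidable. The hypotheses are immediate from standard facts about hyperbolic groups: $G$ admits a finite (indeed Dehn) presentation $\langle \bar{a}\mid R\rangle$, so it is finitely presented; Dehn's algorithm solves the word problem, so listing words in shortlex order and deleting repetitions presents $G$ as a computable structure in the finite language of groups; and $G$ is Hopfian — for torsion-free $G$ this is Sela's theorem, and the general case (torsion allowed) follows from the extension of the Rips machine and Makanin--Razborov diagrams to hyperbolic groups. By Theorem~\ref{main_theorem} it then suffices to show that the weak Whitehead problem for $G$ is decidable.

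Here Hopfianity already does most of the work. Fix a finite generating tuple $\bar{b}=(b_1,\dots,b_n)$ of $G$; I want the orbit $O_{\bar{b}}=\{\alpha(\bar{b}):\alpha\in\mrm{Aut}(G)\}\subseteq G^n$ to be computable. A tuple $\bar{c}\in G^n$ lies in $O_{\bar{b}}$ if and only if (i) the assignment $b_i\mapsto c_i$ respects the relators, i.e. every $r\in R$ evaluates to $1$ after substituting $\bar{c}$ for $\bar{b}$, and (ii) $\bar{c}$ generates $G$: if (i) and (ii) hold, the homomorphism $G\to G$ determined by (i) (via the universal property of the presentation) has image $\langle\bar{c}\rangle=G$, hence is surjective, hence is an automorphism because $G$ is Hopfian; conversely an automorphism carries the generating tuple $\bar{b}$ to a generating tuple and respects the relators. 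Now (i) is decidable uniformly in $\bar{c}$, since $R$ is finite and the word problem is solvable, and (ii) is always semidecidable (search for an expression of each $b_i$ as a word in $\bar{c}$). Hence $O_{\bar{b}}$ is computably enumerable, and the decidability of the weak Whitehead problem for $G$ reduces exactly to the decidability of the generation problem for $G$ (equivalently, of the full Whitehead/orbit problem $\{\bar{c}\in G^n:\exists\alpha\in\mrm{Aut}(G),\ \alpha(\bar{b})=\bar{c}\}$), which is what is needed to certify $\bar{c}\notin O_{\bar{b}}$.

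This last point is the crux, and it is where the heavy machinery of hyperbolic group theory enters: the Whitehead (orbit) problem for tuples in a hyperbolic group is decidable by \cite{orbit, the_hammer}, resting on Dahmani--Guirardel's solution of the isomorphism problem for all hyperbolic groups together with the theory of canonical JSJ splittings and the Rips machine. Granting that input, the complement of $O_{\bar{b}}$ is also computably enumerable, so $O_{\bar{b}}$ is computable, the weak Whitehead problem for $G$ is decidable, and Corollary~\ref{second_corollary} follows from Theorem~\ref{main_theorem}. The main obstacle, therefore, is not in the logic — which is entirely furnished by Theorem~\ref{main_theorem} and the Hopfian reduction above — but in importing the group-theoretic decidability of the orbit problem for hyperbolic groups; the Hopfian reduction has the merit of isolating precisely that single external input, with everything else routine.
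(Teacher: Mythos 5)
Your proposal is correct and takes essentially the same route as the paper: check that hyperbolic groups are finitely presented, computable and Hopfian (Sela and its extension to the torsion case), then feed the decidability of the Whitehead/orbit problem for hyperbolic groups, coming from the Dahmani--Guirardel solution of the isomorphism problem \cite{the_hammer}, into Theorem~\ref{main_theorem}. The only loose point is your parenthetical suggesting the generation problem is ``equivalent'' to the full orbit problem --- in general neither implies the other (cf.\ $F_2\times F_2$, where Whitehead is decidable but generation is not) --- but this is harmless here, since either one implies decidability of the weak Whitehead problem and the orbit-problem input you actually invoke suffices.
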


	\begin{corollary}\label{third_corollary} Polycyclic-by-finite groups have a computable $d$-$\Sigma_2$ Scott sentence.
\end{corollary}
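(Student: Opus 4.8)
The plan is to deduce this from Theorem~\ref{main_theorem} applied to an arbitrary polycyclic-by-finite group $G$: one checks that $G$ satisfies the hypotheses of that theorem and that its weak Whitehead problem is decidable. The hypotheses are classical. A polycyclic-by-finite group is finitely presented (by induction along the polycyclic series, using that a finitely-presented-by-finitely-presented group is finitely presented and that cyclic and finite groups are finitely presented); it is residually finite and finitely generated, hence Hopfian by Malcev's theorem; and, being $\mathbb{Z}$-linear by the Auslander--Swan theorem (or, more elementarily, finitely presented and residually finite), it has solvable word problem, so we may fix a computable copy of $G$. Thus $G$ is a computable Hopfian finitely presented structure, and by Theorem~\ref{main_theorem} it remains only to show that the weak Whitehead problem for $G$ is decidable, i.e. that for each finite generating tuple $\bar{a}$ of $G$, say of length $n$, the set $\{\alpha(\bar{a}) : \alpha \in \mathrm{Aut}(G)\} \subseteq G^{n}$ is computable.

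For this I would appeal to the algorithmic theory of polycyclic groups and their automorphism groups. From a finite polycyclic presentation of $G$ one can effectively produce a finite presentation of $\mathrm{Aut}(G)$ together with its action on the chosen computable copy of $G$, and $\mathrm{Aut}(G)$ is, after a computable passage to a finite-index subgroup, an arithmetic group acting on a $\mathbb{Z}$-lattice built from the Fitting subgroup and the quotients of $G$; this is the structural input on automorphisms of polycyclic groups underlying the solution of the isomorphism problem for polycyclic-by-finite groups by Grunewald--Segal and Segal (see \cite{segal}). Deciding, for a given $\bar{b} \in G^{n}$, whether some $\alpha \in \mathrm{Aut}(G)$ satisfies $\alpha(\bar{a}) = \bar{b}$ is then an instance of the orbit problem for this arithmetic action — equivalently, a ``pointed'' version of the isomorphism problem for $G$ with itself — which is decidable by the Grunewald--Segal solution of orbit problems for arithmetic groups. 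Hence $\{\alpha(\bar{a}) : \alpha \in \mathrm{Aut}(G)\}$ is computable, and the corollary follows.

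The main obstacle is precisely this last step: isolating from the literature the orbit problem for $\mathrm{Aut}(G)$ acting on finite tuples in exactly the effective form required, and in particular making the reduction from $\mathrm{Aut}(G)$ to an arithmetic action uniform in $G$ and in the given presentation. Once that is in place, finite presentability, Hopfianity and computability of $G$ are routine, and Theorem~\ref{main_theorem} does the rest.
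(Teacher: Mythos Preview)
Your proposal is correct and follows the same strategy as the paper: verify the standing hypotheses of Theorem~\ref{main_theorem} for a polycyclic-by-finite group and then invoke a known decidability result for the orbit problem under $\mathrm{Aut}(G)$. The paper's proof is a one-line citation, deducing the weak Whitehead decidability directly from \cite[Theorem~A]{segal_decidability} (Segal, \emph{Decidable properties of polycyclic groups}, Proc.\ LMS 1990), which is precisely the black box you were looking for in your last paragraph; your more detailed sketch via the Grunewald--Segal arithmetic-group machinery is the content behind that theorem, so no separate reduction is needed.
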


On the negative side, we propose the following conjecture:
	
		\begin{conjecture}\label{martin_theorem} There exists a Hopfian finitely presented group with decidable word problem (so a computable group) but undecidable weak Whitehead problem. 
\end{conjecture}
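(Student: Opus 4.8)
The plan is to construct an \emph{explicit} finitely presented group witnessing the conjecture; via Theorem~\ref{main_theorem} this would yield an explicit computable Hopfian f.p.\ group with no computable $d$-$\Sigma_2$ Scott sentence, of the kind produced non-explicitly in \cite{trainor2} around Question~\ref{second_question}. The key preliminary observation is that for a computable Hopfian f.p.\ group $A$ with a fixed finite generating tuple $\bar a = (a_1, \dots, a_n)$ the orbit $O = \{\alpha(\bar a) : \alpha \in \mrm{Aut}(A)\}$ is \emph{automatically} computably enumerable: a tuple $\bar b$ lies in $O$ if and only if the assignment $a_i \mapsto b_i$ extends to an endomorphism of $A$ (a decidable condition, since it reduces to checking the finitely many defining relators and the word problem of $A$ is decidable) and $\langle \bar b \rangle = A$ (a semidecidable condition), the point being that by Hopfianity a surjective endomorphism of $A$ is an automorphism. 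Hence, by Definition~\ref{def_Whitehead}, the weak Whitehead problem for $A$ is \emph{un}decidable exactly when, for some finite generating tuple $\bar a$ of $A$, the set of tuples $\bar b$ that induce an endomorphism of $A$ but do not generate $A$ fails to be computably enumerable --- equivalently, when non-surjectivity of the endomorphism induced by a relation-respecting tuple is not c.e. The task thus reduces to engineering this single phenomenon inside a computable Hopfian f.p.\ group.

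Two complementary routes suggest themselves. The first is to make Harrison-Trainor's witnessing object from \cite{trainor2} finitely presented: that object is a finitely generated module with a $d$-$\Sigma_2$ but no computable $d$-$\Sigma_2$ Scott sentence, and if it --- or a suitable variant --- can be realized as a finitely generated module $M$ over a Laurent polynomial ring $\mathbb Z[x_1^{\pm 1}, \dots, x_k^{\pm 1}]$ for which the split metabelian group $A = M \rtimes \mathbb Z^k$ is finitely presented (a condition on the Bieri--Strebel geometric invariant of $M$), then $A$ is automatically computable, automatically Hopfian (finitely generated metabelian groups are residually finite, by P.\ Hall) and has decidable word problem; one would then transport the undecidability of the orbit problem for $M$ through the module automorphisms of $M$ commuting with the $\mathbb Z^k$-action to conclude that the weak Whitehead problem for $A$ is undecidable. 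The second route is combinatorial-group-theoretic: start from Mihailova's finitely generated subgroup $H \leq F_2 \times F_2$ coding a finitely presented group with unsolvable word problem, so that membership in $H$ is c.e.\ but neither decidable nor co-c.e., while $F_2 \times F_2$ itself is Hopfian, residually finite and has decidable word problem, and build $A$ as an iterated HNN-extension or amalgam over a modification of $F_2 \times F_2$ carrying a uniformly computable sequence $\bar b_m$ of generating-length tuples such that $a_i \mapsto (b_m)_i$ always extends to an endomorphism of $A$ while $\langle \bar b_m \rangle = A$ holds if and only if $g_m \in H$; by the reduction above this makes $\{m : \langle \bar b_m \rangle \ne A\}$ non-c.e., so the weak Whitehead problem for $A$ is undecidable.

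The main obstacle --- the reason this is stated only as a conjecture --- is the conflict between decidability of the word problem of $A$ and the encoding. In the combinatorial route, the naive device for making ``$\langle \bar b_m \rangle = A$'' track ``$g_m \in H$'' (for instance forcing a stable letter to commute with exactly $H$) drags the undecidable membership datum back into the word problem of $A$, destroying its decidability and hence the hypothesis of Theorem~\ref{main_theorem}; conversely, constructions that keep the word problem of $A$ decidable --- with all edge subgroups decidable, so that normal-form (Britton's lemma) methods apply --- tend to render ``$\langle \bar b_m \rangle$ is proper'' semidecidable, collapsing the weak Whitehead problem back to being decidable. In the module route, the analogous difficulties are whether Harrison-Trainor's construction can be carried out over a Laurent polynomial ring with $A = M \rtimes \mathbb Z^k$ finitely presented, and whether $\mrm{Aut}_{\mathbb Z[x_1^{\pm 1}, \dots, x_k^{\pm 1}]}(M)$ genuinely controls $\mrm{Aut}(A)$, which a priori contains further automorphisms. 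In both routes the crux is the same: to make a non-c.e.\ phenomenon the \emph{sole} obstruction to surjectivity of a self-map of a group whose word problem, Hopfianity and --- ideally --- residual finiteness are all retained. This is precisely the gap left open by the techniques of \cite{trainor2, trainor1}, and it may well force one to abandon residual finiteness, replacing P.\ Hall's theorem by a direct Hopfianity argument of the sort available for ascending HNN-extensions of free groups (which are Hopfian, indeed residually finite, by Borisov--Sapir), at the cost of a substantially more delicate control of endomorphisms.
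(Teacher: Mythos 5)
The statement you are addressing is stated in the paper as a \emph{conjecture}, and the paper offers no proof of it --- indeed the surrounding discussion explains exactly why no proof is currently known (decidability of either the Whitehead problem or the generation problem already implies decidability of the weak Whitehead problem, so one must defeat both simultaneously). Your proposal likewise does not prove it. What you do establish correctly is the preliminary reduction: for a computable Hopfian f.p.\ group with generating tuple $\bar a$, the orbit $\{\alpha(\bar a) : \alpha \in \mrm{Aut}(A)\}$ is automatically computably enumerable (this is in substance the argument $(\star_2)$ in the proof of Lemma~\ref{final_lemma}, resting on Lemma~\ref{the_Hopfian_lemma}), so undecidability of the weak Whitehead problem is equivalent to the failure of computable enumerability of the set of relation-respecting tuples that do not generate $A$. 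This is a correct and useful reformulation, consistent with Lemmas~\ref{lemma_ce} and~\ref{final_lemma}, but it is only a restatement of what must be engineered, not a construction.

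The genuine gap is the one you yourself name: neither of your two routes is carried out. In the metabelian route, it is not established that Harrison-Trainor's module (or any variant) can be realized over a Laurent polynomial ring so that $M \rtimes \mathbb{Z}^k$ is finitely presented, nor that the relevant orbit-complexity survives the passage from module automorphisms to group automorphisms of the semidirect product, which generally contains additional automorphisms. In the Mihailova-style route, the decisive step --- arranging that $\langle \bar b_m \rangle = A$ tracks membership in an undecidable c.e.\ subgroup \emph{without} importing that undecidability into the word problem of $A$, and without making properness of $\langle \bar b_m \rangle$ semidecidable --- is precisely the open difficulty; your text concedes that every naive implementation fails on one horn or the other. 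So the proposal should be read as a research plan identifying the correct target (a Hopfian, word-problem-decidable f.p.\ group in which non-generation of relation-respecting tuples is not c.e.), not as a proof; in that respect it is no stronger than the paper's own assessment of the conjecture's status, though the reduction in your first paragraph and the candid analysis of the obstructions are sound and would be a reasonable starting point for an attack on Conjecture~\ref{martin_theorem}.
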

	
	 At first we thought that an affirmative answer to Conjecture~\ref{martin_theorem} was known but correspondence with experts and a careful reading of the literature on the subject convinced us otherwise. There are two algorithmic problems close to this problem: the above-mentioned Whitehead problem and the generation problem. The latter problem asks about the decidability of determining if a given tuple of elements from the group generates the whole group. It is known that in $F_2 \times F_2$ the Whitehead problem is decidable but the generation problem is not (cf. \cite[Theorem 4.4]{miller}). The difficulty about finding an example as in Conjecture~\ref{martin_theorem} is that both the decidability of the Whitehead problem and the decidability of the generation problem imply the decidability of the weak Whitehead problem, and so establishing undecidability of this latter problem is a harder task.
	
%	But the main motivation for Theorem~\ref{main_theorem} was on the negative side, i.e., to answer Question~\ref{second_question} positively. To do this we need to establish the existence of an Hopfian computable finitely presented group with undecidable weak Whithead problem. To the best of our knowledge no such example is present in the literature. Thus, in Section~\ref{the_group_construction} we construct such a group explicitly. 
%%This is the main group theory result of this paper. 
%We believe that this might be relevant elsewhere and so we consider it to be of independent interest.

%	\begin{conjecture}\label{martin_theorem} There exists an Hopfian finitely presented group with decidable word problem (so a computable such group) but undecidable weak Whithead problem. 
%\end{conjecture}
	
%	In \cite[p.~5]{trainor2} it is observed that it follows from general results that there exists a finitely generated group which has a $d$-$\Sigma_2$ Scott sentence but no computable $d$-$\Sigma_2$ Scott sentence, this and our results \mbox{make Problem~\ref{the_problem} very interesting to us.}
	
\medskip

%	\begin{corollary} Torsion-free relatively hyperbolic groups have a computable $d$-$\Sigma_2$ Scott sentence.
%\end{corollary}

%	\begin{remark} Notice that if $A$ is Hopfian, then the computable condition in \ref{main_theorem} is simply saying that there is an algorithm that decides whether a given $n$-tuple from $A$ is in the $\mrm{Aut}(A)$-orbit of $\bar{a} = (a_1, ..., a_n)$.
%\end{remark}

	Our proof of \ref{main_theorem} amalgamates our proof from \cite{paolini} with some techniques from \cite{nies}. We end this introduction observing that our proof of Theorem~\ref{main_theorem} actually strengthens several results from \cite{types}, as well as resolving some of the open problems stated there, e.g. \cite[Problem~4]{types}. \mbox{In fact, in the terminology of \cite{types}, our proof implies:}
	
	\begin{theorem}\label{the_corrolary} Every weakly Hopfian f.p. group is strongly defined by its $\exists^+$-types. 
\end{theorem}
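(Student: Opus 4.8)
The plan is to derive Theorem~\ref{the_corrolary} as a byproduct of the machinery developed for Theorem~\ref{main_theorem}, rather than proving it from scratch. The key observation is that ``weakly Hopfian'' is the natural weakening of ``Hopfian'' that makes the argument go through, and that ``strongly defined by $\exists^+$-types'' is essentially a restatement of the syntactic content of the hard direction of Theorem~\ref{main_theorem} (the construction of the Scott sentence) once one strips away the computability bookkeeping. So the first step is to recall the two relevant definitions from \cite{types}: a f.p. group $G$ is \emph{weakly Hopfian} if every surjective endomorphism with non-trivial kernel is\dots (the precise negation used there), and $G$ is \emph{strongly defined by its $\exists^+$-types} if for every finite generating tuple $\bar{a}$, the positive existential type of $\bar{a}$ pins down $\bar{a}$'s orbit, or rather pins down $G$ up to isomorphism among f.g.\ groups via a suitable uniform/finite family of such types.

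Next I would isolate, from the proof of Theorem~\ref{main_theorem}, the purely model-theoretic core: given a f.p.\ presentation $\langle \bar{a} \mid R \rangle$ of $A$ with $\bar{a}$ a generating tuple, the $\Sigma_2$ part of the Scott sentence says ``there is a tuple satisfying the finitely many defining relations $R$ and generating the structure'' (this is $\exists^+$ together with a generation clause), while the $\Pi_2$ part rules out proper quotients. The point to extract is that when $A$ is weakly Hopfian, any model of the positive existential theory of $\bar{a}$ that is generated by the image of $\bar{a}$ must actually be isomorphic to $A$ via the map $\bar{a} \mapsto$ (image): surjectivity is automatic from generation, and weak Hopfianity is exactly what forbids the kernel from being non-trivial. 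This is the same quotient-elimination step used in the $\Pi_2$ half of the Scott sentence, now repackaged without mentioning computability at all — so it holds for \emph{every} weakly Hopfian f.p.\ group, not merely those with decidable weak Whitehead problem.

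Concretely, the steps in order: (i) fix a finite presentation and a generating tuple $\bar{a}$; (ii) write down the $\exists^+$-type $p(\bar{x})$ of $\bar{a}$ in $A$ and note it contains all the defining relators (these are positive existential — in fact quantifier-free positive — facts) plus, for each $b \in A$, a positive existential formula witnessing that $b$ is a term in $\bar{x}$ (this encodes generation as a positive existential scheme, which is what ``strongly defined by $\exists^+$-types'' is set up to allow); (iii) given any f.g.\ group $B$ and a tuple $\bar{b}$ in $B$ realizing $p$ such that $\bar{b}$ generates $B$, the relators give a surjection $A \twoheadrightarrow B$, $\bar{a} \mapsto \bar{b}$; (iv) invoke weak Hopfianity of $A$ to conclude this surjection is an isomorphism, hence $B \cong A$; (v) conclude $A$ is strongly defined by its $\exists^+$-types in the sense of \cite{types}, which also resolves \cite[Problem~4]{types}.

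The main obstacle I anticipate is purely definitional rather than mathematical: matching the exact formulation of ``strongly defined by $\exists^+$-types'' from \cite{types} — in particular whether it requires a \emph{single} type, a finite set, or a uniform family, and whether the generation clause is allowed to be packaged as infinitely many $\exists^+$-formulas or must be finitary — so that step (ii) produces an object of precisely the right shape. A secondary subtlety is that weak Hopfianity in the sense of \cite{types} is stated for groups with a chosen generating set, and one must check it is genuinely what is needed in step (iv) (i.e.\ that the relevant surjection, being induced by a generating-tuple-to-generating-tuple map, falls under the hypothesis); but since the construction only ever produces surjections of this special kind, this should match directly. Everything else is a transcription of the already-established argument for Theorem~\ref{main_theorem} with the computability decorations deleted.
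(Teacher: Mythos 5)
There are two genuine gaps, one definitional and one mathematical. First, you have targeted the wrong statement: in \cite{types}, and in the paper's Section~\ref{sec_types}, ``$A$ is strongly defined by its $\exists^+$-types'' means that every $\exists^+$-embedding $f\colon A\to A$ is an automorphism of $A$; it is \emph{not} the assertion that the positive existential type of a generating tuple pins down $A$ up to isomorphism among f.g.\ groups. Your steps (ii)--(v), which take an abstract group $B$ with a tuple realizing the type and build a surjection $A\twoheadrightarrow B$, address (at best) that weaker ``defined by types''-style statement; they never engage the actual task, which is to show that a given self-map $f$ --- injective but a priori far from surjective --- is onto. Second, step (iv) uses weak Hopfianity as if it said ``a surjective endomorphism cannot have nontrivial kernel''; that is full Hopfianity. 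Weak Hopfianity (Definition~\ref{def_weak_Hopf}) only says that an endomorphism admitting a \emph{left inverse} is an automorphism, and it cannot be applied to a bare surjection $A\twoheadrightarrow B$. So even the argument you outline cannot be run under the stated hypothesis.

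The paper's proof uses the reflection half of the $\exists^+$-embedding condition precisely to manufacture the missing left inverse. Set $\bar b=f(\bar a)$. Since $f$ reflects positive existential statements, any way of writing $\bar b$ as terms in a tuple satisfying the relations also works for $\bar a$ (Lemma~\ref{the_types_lemma}: $A\models\Theta(\bar b)$), i.e.\ $T(\bar b)\subseteq T(\bar a)$; and $A\models\psi(\bar b)$ gives an endomorphism $\bar a\mapsto\bar b$, i.e.\ $T(\bar a)\subseteq T(\bar b)$. From $T(\bar a)=T(\bar b)$ one extracts endomorphisms $\beta$ and $\alpha$ with $\beta(\bar a)=\bar b$ and $\alpha(\bar b)=\bar a$ (Lemma~\ref{pre_crucial}); then $\alpha\circ\beta$ fixes the generating tuple, hence equals $\mathrm{id}_A$, so $\beta$ has the left inverse $\alpha$, weak Hopfianity gives $\beta\in\mathrm{Aut}(A)$, and finally $f=\beta$ because the two endomorphisms agree on the generators. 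This two-sided containment/left-inverse step is exactly what your one-directional ``relators give a surjection, weak Hopfianity kills the kernel'' outline is missing, and it is the reason the theorem holds under weak Hopfianity rather than requiring Hopfianity.
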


	For the notion of weakly Hopfianity we follow P.M. Neumann \cite{neumann}, for a definition see \ref{def_weak_Hopf}. 
	In Section~\ref{sec_types} we  explain the connection between our work and reference \cite{types}. After we finished writing this paper we discovered that Andr{\'e} already gave a solution to \cite[Problem~4]{types}; in fact in \cite{andre} he proves that if a finitely generated group $G$ is equationally noetherian, or finitely presented and Hopfian, then it is defined by its $\exists^+$-types. Our result \ref{the_corrolary} is much stronger than Andr{\'e}'s with respect to finitely presented groups, in fact, as mentioned in the first part of the introduction, it is a long-standing open problem whether there exists a f.p. non weakly Hopfian group.

\section{The proof of the characterization}\label{sec_proof}

	Our definition of finitely presented structure is standard; for details see e.g. \cite[Section~9.2]{hodges}. We assume that the underlying language $L$ is finite and contains no relational symbols. To express the presentation we write $A = \langle \bar{a} \mid \psi(\bar{x}) \rangle$, where the formula $\psi(\bar{x})$ is a conjunction of atomic positive $L$-formulas. The main example that we have in mind is of course the context of finitely presented groups (cf. \cite{presentation}).
	
%	Concerning the notions of length of an $L$-term and of length of an element $a$ of an $L$-structure $A$ with respect to a generating set $X$ for $A$, denoted as $lg_X(a)$, any reasonable definition (e.g. \cite[Chapter~1]{hodges}) makes our theorems true and so we prefer to remain vague. On the other hand, when dealing with groups or other particular structures, where the exact notion we choose might be relevant for the statements of the corresponding results, we use the notion of length established in that area of research (most notably we will do this for group theory).
	
%	\begin{notation}\label{isolation_notation} Let $L$ be a finite language and $A$ a finitely generated $L$-structure. For the rest of this section we will assume that $A$ is finitely presented and we will fix one such presentation, and denote it as $A = \langle \bar{a} \mid \psi(\bar{x}) \rangle$, where, for all $i \in [1, m]$, the formulas $\varphi_i(\bar{a})$ are assumed to be positive atomic $L$-formulas. Also, let $\bar{a} = (a_1, ..., a_n)$ and let $\psi(\bar{x})$ be the following $L$-formula: $\bigwedge_{i \in [1, m]} \varphi_i(\bar{x})$.
%\end{notation}

	\begin{notation}\label{isolation_notation} In what follows we only consider finitely presented structures. Also, below we fix one such structure $A$ and one finite presentation $A = \langle \bar{a} \mid \psi(\bar{x}) \rangle$.
\end{notation}

%	\begin{notation}\label{isolation_notation} Let $A = \langle \bar{a} \mid \varphi_1(\bar{a}), ..., \varphi_m(\bar{a}) \rangle$, with $\bar{a} = (a_1, ..., a_n)$, all the $a_i$'s distinct and $\varphi_1(\bar{x}), ..., \varphi_m(\bar{x})$ a sequence of positive atomic formulas specifying a presentation of $A$ in the generators $\bar{a}$. Let $\psi(\bar{x})$ be the following formula $\bigwedge_{i \in [1, m]} \varphi_i(\bar{x})$.
%\end{notation}

	\begin{remark}\label{useful_remark} In the context of Notation~\ref{isolation_notation}, if $A \models \psi(\bar{b})$, then the map $\bar{a} \mapsto \bar{b}$ extends uniquely to an endomorphism of $A$.
\end{remark}

	\begin{definition}\label{T(g)_def} In the context of Notation~\ref{isolation_notation}, given $\bar{b} \in A^n$, we let:
	$$T(\bar{b}) = \{(t_1, ..., t_n) \in \mathrm{Term}^n: \exists \bar{c} \in A^n \text{ s.t. } A \models \psi(\bar{c}) \wedge \bigwedge_{i \in [1, n]} b_i = t_i(\bar{c})\};$$
	$$\widehat{T}(\bar{b}) = \mathrm{Term}^n \setminus T(\bar{b}),$$
where by $\mathrm{Term}$ we mean the set of $L$-terms (recall that the language $L$ is fixed).
\end{definition}

	\begin{lemma}\label{crucial_crucial_lemma}  Given $\bar{b}, \bar{c} \in A^n$ we have that the following are equivalent:
	\begin{enumerate}[(1)]
	\item $T(\bar{b}) \subseteq T(\bar{c})$;
	\item there is $\alpha \in \mathrm{End}(A)$ such that $\alpha(b_i) = c_i$, for all $i \in [1, n]$.
	\end{enumerate}
\end{lemma}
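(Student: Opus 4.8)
The plan is to prove both implications by passing through the endomorphism-monoid characterization encoded in the sets $T(\bar b)$. The key link is Remark~\ref{useful_remark}: since $A = \langle \bar a \mid \psi(\bar x)\rangle$, any tuple $\bar b$ with $A \models \psi(\bar b)$ induces a unique endomorphism $\bar a \mapsto \bar b$, and conversely every endomorphism $\alpha$ of $A$ is determined by the tuple $\alpha(\bar a)$, which satisfies $\psi$ because $\psi$ is a conjunction of positive atomic formulas and hence preserved under homomorphisms. So there is a natural surjection from $\{\bar b \in A^n : A \models \psi(\bar b)\}$ onto $\mathrm{End}(A)$, sending $\bar b$ to the endomorphism $\beta_{\bar b}$ with $\beta_{\bar b}(\bar a) = \bar b$. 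The meaning of $T(\bar b)$ in these terms is: $(t_1,\dots,t_n) \in T(\bar b)$ iff there is some endomorphism $\beta_{\bar c}$ (equivalently, some $\bar c \models \psi$) with $b_i = t_i(\beta_{\bar c}(\bar a)) = \beta_{\bar c}(t_i(\bar a))$ for all $i$. In other words, writing $\bar b = (t^b_1(\bar a),\dots)$ is not canonical, but $T(\bar b)$ records exactly which term-tuples $\bar t$ satisfy $\bar b = \bar t(\bar c)$ for some homomorphic image $\bar c$ of $\bar a$.

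For the direction (2) $\Rightarrow$ (1): suppose $\alpha \in \mathrm{End}(A)$ with $\alpha(\bar b) = \bar c$. Take any $\bar t \in T(\bar b)$, witnessed by $\bar d \models \psi(\bar d)$ with $b_i = t_i(\bar d)$. Applying $\alpha$, we get $c_i = \alpha(b_i) = \alpha(t_i(\bar d)) = t_i(\alpha(\bar d))$. Now $\alpha(\bar d)$ satisfies $\psi$ because $\psi$ is positive atomic and $\alpha$ is a homomorphism; hence $\bar t \in T(\bar c)$. This shows $T(\bar b) \subseteq T(\bar c)$, and it is the easy direction.

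For the direction (1) $\Rightarrow$ (2): assume $T(\bar b) \subseteq T(\bar c)$. First observe that $\bar b$ is a homomorphic image of $\bar a$, so each $b_i$ equals $s_i(\bar a)$ for some term $s_i$ (indeed, every element of a finitely generated structure is a term in the generators); then the tuple $\bar s = (s_1,\dots,s_n)$ lies in $T(\bar b)$, witnessed by $\bar c' = \bar a$ itself (note $A \models \psi(\bar a)$). By the hypothesis, $\bar s \in T(\bar c)$, so there is $\bar e \models \psi(\bar e)$ with $c_i = s_i(\bar e)$ for all $i$. Let $\alpha = \beta_{\bar e}$ be the endomorphism $\bar a \mapsto \bar e$. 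Then $\alpha(b_i) = \alpha(s_i(\bar a)) = s_i(\alpha(\bar a)) = s_i(\bar e) = c_i$, as required. The one subtlety to check is that $A \models \psi(\bar a)$ — this is immediate from the presentation — and that the choice of $\bar s$ with $b_i = s_i(\bar a)$ exists, which is the standard fact that in $A = \langle \bar a \mid \psi\rangle$ every element is a value of a term on $\bar a$.

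The main obstacle, such as it is, is purely bookkeeping: making sure that when we apply an endomorphism to a tuple $\bar d$ witnessing membership in some $T(\cdot)$, the image $\bar d$ still satisfies $\psi$ — but this is exactly where the hypothesis that $\psi(\bar x)$ is a conjunction of positive atomic formulas (and that $L$ has no relation symbols, so atomic positive formulas are equations between terms, preserved by all homomorphisms) does the work. No genuine difficulty arises; the lemma is essentially a translation between the syntactic data $T(\bar b)$ and the monoid $\mathrm{End}(A)$ acting on generating-images.
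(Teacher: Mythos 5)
Your argument is correct and follows essentially the same route as the paper: for (1) $\Rightarrow$ (2) you write $b_i = s_i(\bar a)$, use $A \models \psi(\bar a)$ to see $\bar s \in T(\bar b)$, transfer to $T(\bar c)$ to get a witness $\bar e$, and take the endomorphism $\bar a \mapsto \bar e$ from Remark~\ref{useful_remark}, exactly as in the paper's proof. The extra detail you supply for the easy direction (preservation of the positive atomic $\psi$ under homomorphisms) is fine and matches what the paper leaves as ``obvious.''
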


	\begin{proof} That (2) implies (1) is obvious. To prove the
other direction, recalling that $\bar{a} = (a_1,..., a_n)$ is fixed and determines a presentation of $A$, choose $(t_1,...,t_n) \in \mrm{Term}^n$ such that $b_i = t_i(a_1,..., a_n)$ for all $i \in [1, n]$. Since $T(\bar{b}     \subseteq T(\bar{c})$, we may pick $u_1,...,u_n \in A$ such that
$A \models \psi(u_1, ..., u_n) \wedge \bigwedge_{i \in [1, n]} c_i = t_i(u_1,..., u_n)$. Let now $\alpha$ be the endomorphism of $A$ given by $\alpha(a_i) = u_i$, then $\alpha(b_i) = c_i$, for all $i \in [1, n]$.
\end{proof}

\begin{definition}\label{def_weak_Hopf} Let $A$ be a structure.
	\begin{enumerate}[(1)]
	\item We say that $A$ is Hopfian if $\alpha \in \mrm{End}(A)$ and $\alpha$ is onto implies $\alpha \in \mrm{Aut}(A)$.
	\item We say that $A$ is weakly Hopfian if when $\alpha \in \mrm{End}(A)$ has a left inverse (i.e., there exists $\beta \in \mrm{End}(A)$ such that $\beta \circ \alpha = \mrm{id}_A$) we have that $\alpha \in \mrm{Aut}(A)$. 
%	\item We say that $A$ is very badly non-Hopfian if there exists $\alpha \in \mrm{End}(A) \setminus \mrm{Aut}(A)$ such that for every finite $X \subseteq A$ there is $\beta \in \mrm{End}(A)$ such that $\beta \circ \alpha = \mrm{id}_A$ and $\beta$ is injective on $X$ (notice that $\beta$ injective on $A$ would imply $\alpha \in \mrm{Aut}(A)$).
\end{enumerate}	
\end{definition}

	\begin{lemma}\label{pre_crucial} Suppose that the structure $A$ is weakly Hopfian. Then, given $\bar{b} \in A^n$, the following are equivalent:
	\begin{enumerate}[(1)]
	\item $T(\bar{a}) = T(\bar{b})$;
	\item there is $\beta \in \mrm{Aut}(A)$ such that $\beta(\bar{a}) = \bar{b}$.
	\end{enumerate}
\end{lemma}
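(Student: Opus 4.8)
The plan is to obtain both implications by combining Lemma~\ref{crucial_crucial_lemma} with the weak Hopfianity of $A$, using crucially that $\bar{a}$ generates $A$ (since $A = \langle \bar{a} \mid \psi(\bar{x})\rangle$ is a finite presentation, so that any endomorphism fixing $\bar{a}$ pointwise is the identity).

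For the implication $(2) \Rightarrow (1)$: suppose $\beta \in \mathrm{Aut}(A)$ with $\beta(\bar{a}) = \bar{b}$. Then $\beta \in \mathrm{End}(A)$ witnesses $(2)$ of Lemma~\ref{crucial_crucial_lemma} for the pair $\bar{a}, \bar{b}$, so $T(\bar{a}) \subseteq T(\bar{b})$; and $\beta^{-1} \in \mathrm{End}(A)$ satisfies $\beta^{-1}(\bar{b}) = \bar{a}$, so by the same lemma $T(\bar{b}) \subseteq T(\bar{a})$. Hence $T(\bar{a}) = T(\bar{b})$.

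For $(1) \Rightarrow (2)$: assume $T(\bar{a}) = T(\bar{b})$. From $T(\bar{a}) \subseteq T(\bar{b})$ and Lemma~\ref{crucial_crucial_lemma} we get $\alpha \in \mathrm{End}(A)$ with $\alpha(a_i) = b_i$ for all $i$; from $T(\bar{b}) \subseteq T(\bar{a})$ we get $\gamma \in \mathrm{End}(A)$ with $\gamma(b_i) = a_i$ for all $i$. Then $\gamma \circ \alpha$ fixes each $a_i$, hence $\gamma \circ \alpha = \mathrm{id}_A$ because $\bar{a}$ generates $A$. Thus $\alpha$ has a left inverse, and weak Hopfianity gives $\alpha \in \mathrm{Aut}(A)$; since $\alpha(\bar{a}) = \bar{b}$, taking $\beta := \alpha$ proves $(2)$.

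I do not expect a genuine obstacle here: the content is entirely in Lemma~\ref{crucial_crucial_lemma}. The one conceptual point worth flagging is that that lemma yields an endomorphism with a \emph{left} inverse rather than a surjective endomorphism, which is exactly why weak Hopfianity --- and not merely Hopfianity --- is the hypothesis used at this stage.
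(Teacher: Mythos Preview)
Your proof is correct and follows essentially the same route as the paper: both directions are obtained from Lemma~\ref{crucial_crucial_lemma}, and for $(1)\Rightarrow(2)$ the two resulting endomorphisms are composed to the identity on $\bar{a}$ (hence on $A$), after which weak Hopfianity yields the desired automorphism. Your remark that only a \emph{left} inverse is produced---so that weak Hopfianity rather than Hopfianity is the natural hypothesis---is exactly the point.
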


	\begin{proof} The fact that (2) implies (1) is obvious. Concerning the other direction, by \ref{crucial_crucial_lemma}, we can find $\alpha, \beta \in \mathrm{End}(A)$ such that $\beta(\bar{a}) = \bar{b}$ and $\alpha(\bar{b}) = \bar{a}$. Thus, necessarily $\alpha \circ \beta$ fixes the tuple $\bar{a}$, and so by the choice of $\bar{a}$ we have that $\alpha \circ \beta = \mrm{id}_A$. Hence, by weak Hopfianity of $A$ (cf. Definition~\ref{def_weak_Hopf}), $\alpha \in \mathrm{Aut}(A)$, and so $\beta \in \mathrm{Aut}(A)$.
\end{proof}

	\begin{lemma}\label{crucial_lemma} Suppose that the structure $A$ is weakly Hopfian and let $\Theta(\bar{x})$ be the following $\Pi_1$-formula:
	$$\psi(\bar{x}) \wedge \bigwedge_{\bar{t} \in \widehat{T}(\bar{a})} \forall \bar{y} \neg (\psi(\bar{y}) \wedge \bigwedge_{i \in [1, n]} x_i = t_{i}(\bar{y})).$$
Then, given $\bar{b} \in A^n$, the following are equivalent:
	\begin{enumerate}[(1)]
	\item $A \models \Theta(\bar{b})$;
	\item there is $\alpha \in \mrm{Aut}(A)$ such that $\alpha(\bar{a}) = \bar{b}$.
	\end{enumerate}
\end{lemma}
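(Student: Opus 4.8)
The plan is to show that, for $\bar{b} \in A^n$, the satisfaction $A \models \Theta(\bar{b})$ is equivalent to the equality $T(\bar{a}) = T(\bar{b})$, and then to read off the lemma from Lemma~\ref{pre_crucial} (which is where weak Hopfianity enters).

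First I would unwind the two conjuncts of $\Theta$. The second conjunct $\bigwedge_{\bar{t} \in \widehat{T}(\bar{a})} \forall \bar{y}\, \neg(\psi(\bar{y}) \wedge \bigwedge_{i} x_i = t_i(\bar{y}))$ evaluated at $\bar{b}$ says exactly that no $\bar{t} \in \widehat{T}(\bar{a})$ belongs to $T(\bar{b})$, i.e.\ $\widehat{T}(\bar{a}) \subseteq \widehat{T}(\bar{b})$, which is the inclusion $T(\bar{b}) \subseteq T(\bar{a})$. It remains to identify the content of the first conjunct $\psi(\bar{b})$: I claim that (given it holds) one gets the reverse inclusion $T(\bar{a}) \subseteq T(\bar{b})$, and conversely that $T(\bar{a}) \subseteq T(\bar{b})$ forces $\psi(\bar{b})$. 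For the first direction, if $A \models \psi(\bar{b})$ then by Remark~\ref{useful_remark} there is $\gamma \in \mrm{End}(A)$ with $\gamma(\bar{a}) = \bar{b}$; given $\bar{t} \in T(\bar{a})$ witnessed by some $\bar{c}$ with $A \models \psi(\bar{c}) \wedge \bigwedge_i a_i = t_i(\bar{c})$, the tuple $\gamma(\bar{c})$ witnesses $\bar{t} \in T(\bar{b})$, since $\psi$ is a conjunction of positive atomic formulas (hence preserved by $\gamma$) and $t_i(\gamma(\bar{c})) = \gamma(t_i(\bar{c})) = \gamma(a_i) = b_i$. For the converse, since $A \models \psi(\bar{a})$ the tuple $(x_1,\dots,x_n)$ of coordinate projections lies in $T(\bar{a}) \subseteq T(\bar{b})$, and any witness $\bar{c}$ for it satisfies $b_i = x_i(\bar{c}) = c_i$, whence $\bar{b} = \bar{c}$ and $A \models \psi(\bar{b})$. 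Combining the two conjuncts, $A \models \Theta(\bar{b})$ iff $T(\bar{a}) \subseteq T(\bar{b})$ and $T(\bar{b}) \subseteq T(\bar{a})$, i.e.\ iff $T(\bar{a}) = T(\bar{b})$.

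Finally, by Lemma~\ref{pre_crucial} applied to the weakly Hopfian structure $A$, the condition $T(\bar{a}) = T(\bar{b})$ is equivalent to the existence of $\beta \in \mrm{Aut}(A)$ with $\beta(\bar{a}) = \bar{b}$, which is precisely clause (2). I do not expect a genuine obstacle here: the substance of the argument is already encapsulated in Lemmas~\ref{crucial_crucial_lemma} and \ref{pre_crucial}, and the only delicate point is the bookkeeping translating the syntactic quantification over $\bar{t} \in \widehat{T}(\bar{a})$ into the set-theoretic inclusion of the $T$'s, together with the (routine) use of the fact that endomorphisms preserve the positive atomic formula $\psi$.
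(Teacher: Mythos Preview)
Your argument is correct and follows essentially the same route as the paper: both reduce to showing that $A \models \Theta(\bar{b})$ forces $T(\bar{a}) = T(\bar{b})$ (using Remark~\ref{useful_remark} and the content of Lemma~\ref{crucial_crucial_lemma} for one inclusion, the big conjunct for the other) and then invoke Lemma~\ref{pre_crucial}. The only cosmetic difference is that the paper dispatches $(2)\Rightarrow(1)$ by the one-line observation that $A\models\Theta(\bar{a})$ and automorphisms preserve $\Theta$, whereas you instead prove the full biconditional $\Theta(\bar{b}) \Leftrightarrow T(\bar{a}) = T(\bar{b})$ (adding the easy converse that $T(\bar{a})\subseteq T(\bar{b})$ forces $\psi(\bar{b})$ via the identity term tuple); this is a harmless and arguably cleaner packaging of the same content.
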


	\begin{proof} Clearly, $A \models \Theta(\bar{a})$, and so if $\alpha \in \mrm{Aut}(A)$ and $\bar{b} = \alpha(\bar{a})$, then $A \models \Theta(\bar{b})$. Concerning the other direction, let $\bar{b} \in A^n$ be such that $A \models \Theta(\bar{b})$. We claim that $T(\bar{b}) \subseteq T(\bar{a})$. For the sake of contradiction, suppose not, then there is $\bar{t} \in \widehat{T}(\bar{a})$ s.t.:
	$$A \models \exists \bar{y} (\psi(\bar{y}) \wedge \bigwedge_{i \in [1, n]} b_i = t_i(\bar{y})),$$ 
and so clearly it cannot be the case that $A \models \Theta(\bar{b})$. On the other hand, since $A \models \psi(\bar{b})$, by \ref{useful_remark} we have that the map $\bar{a} \mapsto \bar{b}$ extends to an endomorphism of $A$, and so by \ref{crucial_crucial_lemma} we have that $T(\bar{a}) \subseteq T(\bar{b})$. Putting everything together we have that $T(\bar{a}) = T(\bar{b})$. Hence, by Lemma~\ref{pre_crucial}, we are done.
\end{proof}

%	\begin{remark} If $A$ is computable, then deciding whether $\bar{t} \in {T}(\bar{a})$ a is computably enumerable task. Hence, the formula $\Theta(\bar{x})$ in \ref{crucial_lemma} is computable (in the sense of computable model theory) iff deciding whether $\bar{t} \in T(\bar{a})$ is computable.
%\end{remark}

	\begin{lemma}\label{the_Hopfian_lemma} Suppose that $A$ is Hopfian. Then (recalling \ref{T(g)_def}) we have the following:
	$$T(\bar{a}) = \{(t_1, ..., t_n) \in \mathrm{Term}^n: a_i \mapsto t_i(\bar{a}) \in \mrm{Aut}(A) \}.$$
\end{lemma}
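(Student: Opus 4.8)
The plan is to verify the two inclusions separately; only the left-to-right one will use Hopfianity, the other being a general fact about automorphisms.

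For the inclusion $T(\bar{a}) \subseteq \{\bar{t} : a_i \mapsto t_i(\bar{a}) \in \mrm{Aut}(A)\}$, I would start from $\bar{t} \in T(\bar{a})$ and fix a witness $\bar{c} \in A^n$ with $A \models \psi(\bar{c})$ and $a_i = t_i(\bar{c})$ for all $i \in [1,n]$. By Remark~\ref{useful_remark} the assignment $\bar{a} \mapsto \bar{c}$ extends to an endomorphism $\gamma \in \mrm{End}(A)$. Then $\gamma(t_i(\bar{a})) = t_i(\gamma(\bar{a})) = t_i(\bar{c}) = a_i$ for each $i$, so every $a_i$ lies in the image of $\gamma$; since $\bar{a}$ generates $A$, $\gamma$ is onto. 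Hopfianity then gives $\gamma \in \mrm{Aut}(A)$, and the identity $\gamma(t_i(\bar{a})) = a_i$ rewrites as $\gamma^{-1}(a_i) = t_i(\bar{a})$. Hence the map $a_i \mapsto t_i(\bar{a})$ is realized by $\gamma^{-1} \in \mrm{Aut}(A)$, as desired. Note that here one does not try to define the endomorphism $a_i \mapsto t_i(\bar{a})$ directly (which would require a separate verification that the relations $\psi$ are respected); instead it is produced as $\gamma^{-1}$, whose existence is automatic once $\gamma$ is known to be an automorphism.

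For the reverse inclusion, suppose $\alpha \in \mrm{Aut}(A)$ with $\alpha(a_i) = t_i(\bar{a})$ for all $i$, and put $\bar{c} := \alpha^{-1}(\bar{a})$. Since $A \models \psi(\bar{a})$ by the choice of presentation and $\alpha^{-1}$ is an automorphism, $A \models \psi(\bar{c})$. Moreover $t_i(\bar{c}) = t_i(\alpha^{-1}(\bar{a})) = \alpha^{-1}(t_i(\bar{a})) = \alpha^{-1}(\alpha(a_i)) = a_i$ for every $i$, so $\bar{c}$ witnesses $\bar{t} \in T(\bar{a})$.

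The argument is short, and the only real point requiring care is the first direction: one must resist defining $a_i \mapsto t_i(\bar{a})$ as an endomorphism out of thin air, and instead invoke Hopfianity on the naturally available surjective endomorphism $\gamma$, reading off the wanted automorphism as $\gamma^{-1}$. Everything else is routine substitution together with the fact that automorphisms preserve satisfaction of the positive atomic formula $\psi$.
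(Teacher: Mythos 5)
Your proof is correct and follows essentially the same route as the paper's: in the forward direction both arguments take the endomorphism sending $\bar{a}$ to the witness tuple, observe it is surjective because each $a_i = t_i$ of the witnesses lies in its image, apply Hopfianity, and read off the desired automorphism as its inverse; the reverse direction is the same substitution using $\alpha^{-1}(\bar{a})$ as the witness. The only difference is that you spell out a couple of steps (surjectivity, preservation of $\psi$) that the paper leaves implicit.
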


	\begin{proof} Suppose that $A$ is Hopfian. For the inclusion ``$\subseteq$'', let $\bar{t} = (t_1, ..., t_n) \in T(\bar{a})$, then there is $\bar{b} \in A^n$ such that $A \models \psi(\bar{b}) \wedge \bigwedge_{i \in [1, n]} a_i = t_i(\bar{b})$. Hence, the endomorphism $\alpha$ determined by $a_i \mapsto b_i$ is surjective, and thus, by Hopfianity, $\alpha \in \mrm{Aut}(A)$. Hence, we have $\alpha^{-1}(a_i) = t_i(\alpha^{-1}(\bar{b})) = t_i(\bar{a})$, and so we are done. For the inclusion ``$\supseteq$'', let $\bar{t} = (t_1, ..., t_n)$ be such that the endomorphism $\alpha$ determined by the assignment $a_i \mapsto t_i(\bar{a})$ is an automorphism. For $i \in [1, n]$, let $t_i(\bar{a}) = c_i$, then we have that $a_i = \alpha^{-1}(c_i) = t_i(\alpha^{-1}(\bar{a}))$, and so letting $\bar{b} = \alpha^{-1}(\bar{a})$ we have: 
	$$A \models \psi(\bar{b}) \wedge \bigwedge_{i \in [1, n]} a_i = t_i(\bar{b}).$$
\end{proof}

	\begin{lemma}\label{lemma_ce} Suppose that $A$ is computable and Hopfian. Then the following are equivalent:
	\begin{enumerate}[(1)]
	\item $\widehat{T}(\bar{a})$ (cf. \ref{T(g)_def}) is a computably enumerable subset of $\mrm{Term}^n$;
	\item the formula $\Theta(\bar{x})$ from Lemma~\ref{crucial_lemma} is a computable $\Pi_1$-formula;
%	\item the type $q(\bar{x}) = \{ \exists \bar{y} (\psi(\bar{y}) \wedge \bigwedge_{i \in [1, n]} x_i = t_{i}(\bar{y})) : {\bar{t} \in T(\bar{a})} \}$ is computable;
	%\item we can decide algorithmically which tuples $\bar{b} \in A^{n}$ are such that the map $\bar{a} \mapsto \bar{b}$ extends to a surjective endomorphism of $A$.
	%\item the Whithead problem for $A$ is decidable.
	\item the set of tuples $\bar{b} \in A^{n}$ not in the $\mrm{Aut}(A)$-orbit of $\bar{a}$ is computably enumerable.	\end{enumerate}
\end{lemma}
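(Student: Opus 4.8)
The plan is to prove the cycle of implications $(1)\Rightarrow(2)\Rightarrow(3)\Rightarrow(1)$, using the earlier structural lemmas to translate between the three formulations. The key observation I would lean on throughout is Lemma~\ref{the_Hopfian_lemma}, which identifies $T(\bar{a})$ with the set of term-tuples inducing an automorphism via $a_i\mapsto t_i(\bar a)$; consequently $\widehat T(\bar a)$ is the set of term-tuples for which $a_i\mapsto t_i(\bar a)$ is \emph{not} an automorphism of $A$.

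\smallskip
\noindent\emph{$(1)\Rightarrow(2)$.} Recall that $\Theta(\bar x)$ is the conjunction of $\psi(\bar x)$ with $\bigwedge_{\bar t\in\widehat T(\bar a)}\forall\bar y\,\neg(\psi(\bar y)\wedge\bigwedge_i x_i=t_i(\bar y))$. A $\Pi_1$-formula is a computable $\Pi_1$-formula precisely when the index set of its conjuncts is computably enumerable; here that index set is (a computable copy of) $\widehat T(\bar a)$ together with the single extra conjunct $\psi(\bar x)$, so a c.e.\ enumeration of $\widehat T(\bar a)$ yields a c.e.\ enumeration of the conjuncts of $\Theta$, and conversely. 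I would just spell this out: from an enumeration of $\widehat T(\bar a)$ one effectively writes down the list of conjuncts $\forall\bar y\,\neg(\psi(\bar y)\wedge\bigwedge_i x_i=t_i(\bar y))$, prepending the fixed conjunct $\psi(\bar x)$.

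\smallskip
\noindent\emph{$(2)\Rightarrow(3)$.} By Lemma~\ref{crucial_lemma}, for $\bar b\in A^n$ we have $A\models\Theta(\bar b)$ iff $\bar b$ lies in the $\mathrm{Aut}(A)$-orbit of $\bar a$; hence $\bar b$ is \emph{not} in the orbit iff $A\not\models\Theta(\bar b)$. Now I would use computability of $A$: negating a $\Pi_1$ assertion gives a $\Sigma_1$ assertion, and checking that a given finite conjunct $\forall\bar y\,\neg(\psi(\bar y)\wedge\bigwedge_i b_i=t_i(\bar y))$ \emph{fails} at a particular $\bar b$ amounts to searching (through the computable structure $A$) for a witnessing tuple $\bar y$, which is a c.e.\ condition. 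Since the list of conjuncts of $\Theta$ is c.e.\ by hypothesis (2), the set of $\bar b$ for which some conjunct fails — equivalently the complement of the orbit — is the projection of a c.e.\ predicate, hence c.e. (The conjunct $\psi(\bar b)$ is even decidable, as $\psi$ is quantifier-free atomic-positive and $A$ is computable, so it causes no trouble.)

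\smallskip
\noindent\emph{$(3)\Rightarrow(1)$.} Here I would run the previous reasoning backwards, again via Lemma~\ref{crucial_lemma} and Lemma~\ref{the_Hopfian_lemma}. Fix a computable surjection $\mathrm{Term}^n\to A^n$ sending $\bar t$ to $\bar t(\bar a)$. By Lemma~\ref{the_Hopfian_lemma}, $\bar t\in\widehat T(\bar a)$ iff $a_i\mapsto t_i(\bar a)$ is not an automorphism, and — since any endomorphism sending $\bar a$ to $\bar t(\bar a)$ is the unique one — this happens iff the tuple $\bar t(\bar a)$ is not in the $\mathrm{Aut}(A)$-orbit of $\bar a$ (when $\bar t(\bar a)=\alpha(\bar a)$ for some $\alpha\in\mathrm{Aut}(A)$, that $\alpha$ must agree with the endomorphism $a_i\mapsto t_i(\bar a)$, so the latter is the automorphism $\alpha$; conversely an automorphism sending $\bar a\mapsto\bar t(\bar a)$ witnesses orbit membership). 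So $\widehat T(\bar a)$ is the preimage under the computable map $\bar t\mapsto\bar t(\bar a)$ of the set in (3), and the preimage of a c.e.\ set under a computable map is c.e.

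\smallskip
\noindent I expect the only genuine subtlety to be the bookkeeping in $(2)\Rightarrow(3)$: one must be careful that ``$A\not\models\Theta(\bar b)$'' is phrased so that it becomes c.e.\ rather than merely co-c.e., which is why it matters that the \emph{list of conjuncts} of $\Theta$ is c.e.\ (not just that each conjunct is decidable) — the failure of $\Theta$ at $\bar b$ is witnessed by naming a conjunct together with a witness $\bar y$ to its negation. Everything else is a routine translation through the lemmas already established, the Hopfian identification of $T(\bar a)$ being the conceptual crux that links the term-side statement (1) to the orbit-side statement (3).
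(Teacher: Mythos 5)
Your proposal is correct, and two of its three steps coincide with the paper's argument: the equivalence of (1) and (2) is treated as immediate bookkeeping in both, and your $(3)\Rightarrow(1)$ (evaluate $\bar t$ at $\bar a$ and test orbit membership of $\bar t(\bar a)$, justified by Lemma~\ref{the_Hopfian_lemma}) is exactly the paper's argument for that direction. Where you diverge is the remaining link: the paper goes directly from (1) to (3) purely via Lemma~\ref{the_Hopfian_lemma} --- given $\bar b$, enumerate $\widehat T(\bar a)$ and search for $\bar t$ with $A\models\bigwedge_i b_i=t_i(\bar a)$, which halts iff $\bar b$ is outside the orbit --- whereas you go $(2)\Rightarrow(3)$ semantically, invoking Lemma~\ref{crucial_lemma} to identify the orbit with the solution set of $\Theta$ and then observing that failure of a computable $\Pi_1$-formula in a computable structure is a c.e.\ condition (in effect re-deriving the fact the paper later cites from \cite{hyper} in Lemma~\ref{final_lemma}). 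Both are sound; the paper's route is a bit more economical in that this particular lemma never needs the orbit characterization by $\Theta$, while yours leans on Lemma~\ref{crucial_lemma}, whose hypothesis is weak Hopfianity --- so you should note explicitly that Hopfian implies weakly Hopfian (if $\beta\circ\alpha=\mathrm{id}_A$ then $\beta$ is onto, hence an automorphism, hence so is $\alpha=\beta^{-1}$), a standard fact the paper also uses implicitly elsewhere. With that one-line remark added, your argument is complete.
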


	\begin{proof} Clearly (1) and (2) are equivalent. We show that (1) holds if and only if (3) does. Suppose that (3) holds, then, by \ref{the_Hopfian_lemma}, to check if $\bar{t} \in \widehat{T}(\bar{a})$ it suffices to let $b_i = t_i(\bar{a})$ and then check if $\bar{b} \in A^{n}$ is not in the $\mrm{Aut}(A)$-orbit of $\bar{a}$. Finally, suppose (1), then, again by \ref{the_Hopfian_lemma}, to check whether $\bar{b} \in A^{n}$ is not in the $\mrm{Aut}(A)$-orbit of $\bar{a}$ it suffices to search for $\bar{t} \in \widehat{T}(\bar{a})$ such that $A \models \bigwedge_{i \in [1, n]} b_i = t_i(\bar{a})$.	
%	we can list the tuples $\bar{b} \in A^{n}$ not in the $\mrm{Aut}(A)$-orbit of $\bar{a}$, let us say $\bar{b}_0, \bar{b}_1, ...$. Then, \ref{the_Hopfian_lemma}, to check if $\bar{t} \in \hat{T}(\bar{a})$ it suffices to check if $A \models b_i = t_i(\bar{a})$.
%	
%	This is immediate by the definitions and Lemmas~\ref{crucial_lemma} and \ref{the_Hopfian_lemma}.
\end{proof}

\begin{definition}\label{def_Whitehead} Given a finitely generated structure $A = \langle \bar{a} \rangle$ we say that $(A, \bar{a})$ has decidable weak Whitehead problem if there exists an algorithm which decides whether a tuple $\bar{b} \in G^n$ \mbox{is automorphic to $\bar{a}$ (i.e., if $\exists \alpha \in \mrm{Aut}(A)$ s.t. $\alpha(\bar{a}) = \bar{b}$).}
\end{definition}

\begin{lemma}\label{final_lemma} Suppose that $A$ is computable and Hopfian. Then the following are equivalent:
	\begin{enumerate}[(1)]
	\item the $\mrm{Aut}(A)$-orbit of $\bar{a}$ is definable by a computable $\Pi_1$-formula;
	\item the weak Whitehead problem for $(A, \bar{a})$ is decidable.
\end{enumerate}
\end{lemma}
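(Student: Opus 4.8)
The plan is to show the two implications separately, using Lemma~\ref{lemma_ce} as the main bridge between the syntactic and algorithmic sides. First I would observe that by Lemma~\ref{crucial_lemma} the $\mrm{Aut}(A)$-orbit of $\bar{a}$ is \emph{always} defined (in $A$) by the $\Pi_1$-formula $\Theta(\bar{x})$; the only question is whether $\Theta$ can be taken to be a \emph{computable} $\Pi_1$-formula, equivalently (by Lemma~\ref{lemma_ce}(1)$\Leftrightarrow$(2)) whether $\widehat{T}(\bar{a})$ is computably enumerable. So the content of the lemma is really: $\widehat{T}(\bar{a})$ is c.e. iff the weak Whitehead problem for $(A,\bar{a})$ is decidable. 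Here I would be a little careful that ``the $\mrm{Aut}(A)$-orbit of $\bar{a}$ is definable by a computable $\Pi_1$-formula'' in clause (1) means definable by \emph{some} computable $\Pi_1$-formula, and note that if any computable $\Pi_1$-formula defines the orbit, then in particular it defines the same set as $\Theta(\bar{x})$, so this does not depend on the choice of presentation.

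For the direction (2)$\Rightarrow$(1): assume the weak Whitehead problem for $(A,\bar{a})$ is decidable, i.e.\ the $\mrm{Aut}(A)$-orbit of $\bar{a}$ is a decidable subset of $A^n$. Then its complement — the set of $\bar{b}\in A^n$ not automorphic to $\bar{a}$ — is in particular computably enumerable, which is exactly clause (3) of Lemma~\ref{lemma_ce}. By Lemma~\ref{lemma_ce} we get clause (2), namely that $\Theta(\bar{x})$ is a computable $\Pi_1$-formula, and since $\Theta$ defines the orbit by Lemma~\ref{crucial_lemma}, clause (1) holds.

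For the direction (1)$\Rightarrow$(2): assume the orbit is defined by a computable $\Pi_1$-formula. As noted above this forces $\Theta(\bar{x})$ itself to be (equivalent to) a computable $\Pi_1$-formula, so by Lemma~\ref{lemma_ce} clause (3) holds: the set of $\bar{b}\in A^n$ not in the orbit of $\bar{a}$ is c.e. On the other hand, the set of $\bar{b}$ that \emph{are} in the orbit of $\bar{a}$ is always c.e.\ when $A$ is computable: one enumerates all term-tuples $\bar{s}=(s_1,\dots,s_n)$, checks using computability of $A$ whether $A\models\psi(s_1(\bar a),\dots,s_n(\bar a))$ and $A\models\bigwedge_i b_i=s_i(\bar a)$, and by Lemma~\ref{the_Hopfian_lemma} (Hopfianity) the tuples of the form $(s_1(\bar a),\dots,s_n(\bar a))$ with $\bar s\in T(\bar a)$ are precisely the images $\alpha(\bar a)$ for $\alpha\in\mrm{Aut}(A)$, so this enumeration is exactly the orbit. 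Having both the orbit and its complement c.e.\ gives decidability, i.e.\ the weak Whitehead problem for $(A,\bar a)$ is decidable.

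The main obstacle is purely bookkeeping rather than conceptual: one must make sure the two notions of ``weak Whitehead problem'' in the paper (Definition~\ref{def_Whitehead} on tuples automorphic to a \emph{fixed} generating tuple $\bar a$, versus the global version quantifying over all generating tuples) are reconciled, and that ``computable $\Pi_1$-formula'' is unpacked correctly through the c.e.-ness of $\widehat T(\bar a)$ as in Lemma~\ref{lemma_ce}. Once those identifications are in place, everything reduces to the standard fact that a set is decidable iff both it and its complement are c.e., applied to the orbit of $\bar a$, with Hopfianity entering only through Lemma~\ref{the_Hopfian_lemma} to guarantee the ``orbit is c.e.'' half.
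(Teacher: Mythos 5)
Your overall architecture is the same as the paper's: decidability of the weak Whitehead problem for $(A,\bar a)$ is split into ``the orbit is c.e.'' and ``the complement of the orbit is c.e.'', with Hopfianity entering through Lemma~\ref{the_Hopfian_lemma}, and your direction (2)$\Rightarrow$(1) via Lemma~\ref{lemma_ce} and Lemma~\ref{crucial_lemma} is exactly the paper's. However, your direction (1)$\Rightarrow$(2) has a genuine gap at its key step. To get the complement of the orbit c.e.\ you argue that, since some computable $\Pi_1$-formula $\Delta$ defines the orbit, $\Theta$ is ``(equivalent to) a computable $\Pi_1$-formula'', and you then invoke Lemma~\ref{lemma_ce}. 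But clause (2) of Lemma~\ref{lemma_ce} is about $\Theta$ \emph{literally} being a computable formula, i.e.\ about the index set $\widehat{T}(\bar a)$ of its conjuncts being computably enumerable; semantic equivalence over $A$ to some computable $\Pi_1$-formula does not give this, and proving that $\widehat{T}(\bar a)$ is c.e.\ is exactly as hard as proving that the complement of the orbit is c.e., so the appeal is circular. The missing ingredient, which is how the paper argues $(\star_1)$, is the fact that in a computable structure a computable $\Sigma_1$-formula defines a c.e.\ set (\cite[Theorem~7.5]{hyper}), applied to $\neg\Delta$.

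There is a second error, in your enumeration of the orbit itself. You enumerate term tuples $\bar s$ and check the quantifier-free conditions $A\models\psi(s_1(\bar a),\dots,s_n(\bar a))$ and $A\models\bigwedge_i b_i=s_i(\bar a)$. That check only certifies that $\bar b$ is the image of $\bar a$ under an \emph{endomorphism} (Remark~\ref{useful_remark}); it does not certify $\bar s\in T(\bar a)$, since membership in $T(\bar a)$ requires a witness $\bar c$ with $A\models\psi(\bar c)\wedge\bigwedge_i a_i=s_i(\bar c)$ (Definition~\ref{T(g)_def}), which by Lemma~\ref{the_Hopfian_lemma} is what corresponds to an automorphism. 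For instance, in the free group $F_2=\langle a_1,a_2\rangle$ (computable and Hopfian, with empty $\psi$) your procedure would enumerate $(a_1^2,a_2)$, which is not automorphic to $(a_1,a_2)$. The fix is the paper's $(\star_2)$: given $\bar b$, search for $\bar t$ with $A\models\psi(\bar b)\wedge\bigwedge_i a_i=t_i(\bar b)$, so that $\bar b$ itself serves as the witness, and conclude via Hopfianity and Lemma~\ref{the_Hopfian_lemma} that this search succeeds exactly when $\bar b$ is in the $\mrm{Aut}(A)$-orbit of $\bar a$. In short, both halves of your (1)$\Rightarrow$(2) need a witness/dovetailing argument that your write-up replaces by an invalid shortcut.
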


	\begin{proof} The fact that (2) implies (1) is by Lemma~\ref{lemma_ce}. Concerning the other direction, let $\Delta(\bar{x})$ be a computable $\Pi_1$-formula defining the $\mrm{Aut}(A)$-orbit of $\bar{a}$. Evidently, deciding the weak Whitehead problem for $A = \langle \bar{a} \rangle$ means deciding (algorithmically) whether a tuple $\bar{b} \in A^n$ is or not in the following set:
	$$X_* = \{\bar{b} \in A^n : \exists \alpha \in \mrm{Aut}(A) \text{ such that } \alpha(\bar{a}) = \bar{b}\}.$$
We show that $X_*$ is computably enumerable and that $\widehat{X}_* := A^n \setminus X_*$ is computably enumerable, clearly this suffices (as then $X_*$ is computable). 
	\begin{enumerate}[$(\star_1)$]
	\item $\widehat{X}_*$ is computably enumerable.
	\end{enumerate}
Why $(\star_1)$? Recall that $\Delta(\bar{x})$ is a computable $\Pi_1$-formula defining the $\mrm{Aut}(A)$-orbit of $\bar{a}$ and so $\bar{b} \in X_*$ if and only if $A \models \Delta(\bar{b})$. It follows that $\bar{b} \in \widehat{X}_*$ iff $A \models \neg \Delta(\bar{b})$, but the negation of a $\Pi_1$-formula is a $\Sigma_1$-formula, and by \cite[Theorem~7.5]{hyper}, in a computable structure a $\Sigma_1$-formula always defines \mbox{a computably enumerable set.}
\begin{enumerate}[$(\star_2)$]
	\item ${X}_*$ is computably enumerable.
	\end{enumerate}
Why $(\star_2)$? Given $\bar{b} \in A^n$ search for $\bar{t} \in \mrm{Term}^n$ such that we have the following:
$$A \models \psi(\bar{b}) \wedge \bigwedge_{i \in [1, n]} a_i = t_i(\bar{b}).$$
As $A$ is assumed to be computable the procedure is algorithmic, and as $A$ is assumed to be Hopfian, by \ref{the_Hopfian_lemma}, the procedure stops exactly \mbox{when $\bar{b} \in {X}_*$, and so we are done.}
%	$$A \models \bigvee_{\bar{t} \in \mrm{Term}^n} \exists \bar{y} (\psi(\bar{y}) \wedge \bigwedge_{i \in [1, n]} a_i = t_i(\bar{y})) \wedge \bigwedge_{i \in [1, n]} b_i = t_i(\bar{a}).$$
%Why $(\star_2)$? $A$ is Hopfian and so by \ref{the_Hopfian_lemma}, we have that $\bar{b} \in {X}_*$ if and only if there is $\bar{t} \in T(\bar{a})$ such that $A \models \bigwedge_{i \in [1, n]} b_i = t_i(\bar{a})$. and the very same definition of $T(\bar{a})$
\end{proof}

	\begin{proviso} We now stop fixing the presentation $A = \langle \bar{a} \mid \varphi_1(\bar{a}), ..., \varphi_m(\bar{a}) \rangle$.
\end{proviso}

	\begin{fact}[{\cite[4.5]{alvir}}]\label{the_equivalence_fact} Let $A$ be a computable structure. Then TFAE:
	\begin{enumerate}[(1)]
	\item $A$ has a $d$-$\Sigma_2$ Scott sentence;
	\item $\exists$ $\bar{a}$ s.t. $\langle \bar{a} \rangle_A = A$ and the $\mrm{Aut}(A)$-orbit of $\bar{a}$ is \mbox{defined by a computable $\Pi_1$ fmla;}
	\item $\forall$ $\bar{a}$ s.t. $\langle \bar{a} \rangle_A = A$, the $\mrm{Aut}(A)$-orbit of $\bar{a}$ is defined by a computable $\Pi_1$ fmla.
	\end{enumerate}
\end{fact}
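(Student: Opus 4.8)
The plan is to prove the cycle $(3)\Rightarrow(2)\Rightarrow(1)\Rightarrow(3)$, relying throughout on a reformulation of the orbit condition. Since in our setting $A$ is finitely generated, fix a generating tuple $\bar{a}$ and let $\delta(\bar{x})$ be the (computable) $\Pi_1$ formula expressing the quantifier-free type of $\bar{a}$; as the language has no relation symbols, any $\bar{b}$ with $A\models\delta(\bar{b})$ generates a substructure $\langle\bar{b}\rangle_A\cong A$ via $\bar{a}\mapsto\bar{b}$, so $\bar{b}$ lies in the $\mrm{Aut}(A)$-orbit of $\bar{a}$ iff in addition $\bar{b}$ generates $A$. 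Using the standard fact that in a countable structure two tuples are automorphic iff they realize the same complete $\mathfrak{L}_{\omega_1,\omega}$-type, and that $\Pi_1$ is closed under countable conjunctions, the orbit of $\bar{a}$ is $\Pi_1$-definable iff the $\Pi_1$-type of $\bar{a}$ already isolates its complete type, i.e. iff there is a $\Pi_1$ formula $\theta(\bar{x})$ with $A\models\theta(\bar{a})$ such that $A\models\theta(\bar{b})$ forces $\bar{b}$ to be automorphic to $\bar{a}$. Granting this, $(3)\Rightarrow(2)$ is immediate, while $(2)\Leftrightarrow(3)$ follows by transporting such a $\theta$ between any two generating tuples $\bar{a},\bar{a}'$ along the terms witnessing $\bar{a}'\in\langle\bar{a}\rangle_A$ and $\bar{a}\in\langle\bar{a}'\rangle_A$: substituting terms into a $\Pi_1$ formula keeps it $\Pi_1$, and the identities $\bar{u}(\bar{v}(\bar{a}'))=\bar{a}'$ lie in the quantifier-free type, so they persist on all tuples satisfying $\delta'$.

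For $(2)\Rightarrow(1)$ I would exhibit the Scott sentence explicitly. Let $\theta(\bar{x})$ be the computable $\Pi_1$ formula defining the orbit of $\bar{a}$, let $\mu(\bar{x})$ be the computable $\Pi_2$ formula $\forall y\bigvee_{t\in\mrm{Term}}y=t(\bar{x})$ asserting that $\bar{x}$ generates, and set
$$\sigma \;:=\; \underbrace{\exists \bar{x}\, \theta(\bar{x})}_{\Sigma_2} \;\wedge\; \underbrace{\forall \bar{x}\,\big(\theta(\bar{x}) \rightarrow (\delta(\bar{x}) \wedge \mu(\bar{x}))\big)}_{\Pi_2}.$$
The first conjunct is $\Sigma_2$ because $\theta\in\Pi_1$; the second is $\Pi_2$ because $\neg\theta\in\Sigma_1$ and $\delta\wedge\mu\in\Pi_2$, so $\sigma$ is computable $d$-$\Sigma_2$. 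It is a Scott sentence: $A\models\sigma$ since every realization of $\theta$ is automorphic to $\bar{a}$ and hence generates with the correct quantifier-free type; conversely if $B\models\sigma$, a witness $\bar{b}$ of the first conjunct satisfies $\delta(\bar{b})\wedge\mu(\bar{b})$ by the second, so $\bar{a}\mapsto\bar{b}$ is an isomorphism $A\to B$.

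The substance is $(1)\Rightarrow(3)$, which I would prove by contraposition on an arbitrary generating tuple $\bar{a}$: assuming its orbit is not $\Pi_1$-definable, I produce a model of any given $d$-$\Sigma_2$ sentence true of $A$ that is not isomorphic to $A$. By the reformulation there is $\bar{b}$ realizing the whole $\Pi_1$-type of $\bar{a}$ but not automorphic to it; then $h\colon\bar{a}\mapsto\bar{b}$ is a \emph{proper} self-embedding $A\to A$, its image $\langle\bar{b}\rangle_A$ being a copy of $A$ that is not all of $A$ (as $\bar{b}$ does not generate). Writing the given sentence as $\alpha\wedge\beta$ with $\alpha\in\Sigma_2$, $\beta\in\Pi_2$, I would form the direct limit $B=\varinjlim(A\xrightarrow{h}A\xrightarrow{h}\cdots)$. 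As an increasing union of properly nested copies of $A$ it is not finitely generated, hence $B\not\cong A$; and since $\Pi_2$ sentences are preserved under unions of chains (each conjunct $\forall\bar{y}\,\gamma(\bar{y})$ with $\gamma\in\Sigma_1$ passes to $B$ because existential formulas are preserved upward), we get $B\models\beta$.

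The hard part will be to show $B\models\alpha$, which then yields the contradiction that completes the argument. Fixing a disjunct $\exists\bar{y}\,\pi(\bar{y})$ of $\alpha$ (with $\pi\in\Pi_1$) witnessed in $A$ by a term tuple $\bar{c}=\bar{s}(\bar{a})$, I must ensure that the image of $\bar{c}$ in the bottom copy of the tower remains a witness against \emph{all} of $B$; since each element of $B$ lies at some finite stage, this amounts to verifying $A\models\pi(\bar{s}(h^{j}(\bar{a})))$ for every $j$. This is exactly where the hypothesis that $\bar{b}$ shares the entire $\Pi_1$-type of $\bar{a}$ enters — together with the fact that $h$, being an embedding, preserves $\Sigma_1$ formulas — to propagate the universal witness up the tower; controlling this propagation (and, if this concrete $B$ is insufficient for a given $\alpha$, adapting the limit) is the delicate core. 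Finally, I expect the whole argument to go through effectively, so that a computable $d$-$\Sigma_2$ Scott sentence yields a computable $\Pi_1$ defining formula as required in $(2)$ and $(3)$, but keeping the witness data and the $\Pi_1$-type computably enumerable is an additional layer of bookkeeping that must be tracked at each step.
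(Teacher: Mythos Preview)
The paper does not prove this statement: it is recorded as a Fact with a citation to \cite[4.5]{alvir} and no argument is supplied, so there is no proof in the paper to compare your proposal against.

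On the proposal itself: your $(2)\Rightarrow(1)$ and $(2)\Leftrightarrow(3)$ are correct, and in $(1)\Rightarrow(3)$ you are more pessimistic than you need to be about $B\models\alpha$. Since $\bar b$ realizes the full $\Pi_1$-type of the generating tuple $\bar a$, for any terms $\bar s$ and any $\Pi_1$ formula $\pi$ the substituted formula $\pi(\bar s(\bar x))$ is again $\Pi_1$, so $A\models\pi(\bar s(\bar a))$ forces $A\models\pi(\bar s(\bar b))=\pi(h(\bar s(\bar a)))$; hence $h$ preserves \emph{all} $\Pi_1$ formulas (not merely quantifier-free ones), and by iteration any $\Pi_1$ witness $\bar c\in A_0$ survives in every $A_j$ and therefore in $B$. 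So your direct limit really does model $\alpha\wedge\beta$ while failing to be isomorphic to $A$. The genuine gap is the effective content. As stated, $(1)$ is a \emph{plain} $d$-$\Sigma_2$ Scott sentence while $(2)$ and $(3)$ demand a \emph{computable} $\Pi_1$ defining formula; your contrapositive begins from ``the orbit is not $\Pi_1$-definable at all'', which is strictly stronger than ``not computably $\Pi_1$-definable''. The full $\Pi_1$-type of $\bar a$ in a computable structure has only a $\Pi^0_1$ index set, so upgrading $\Pi_1$-definability to computable $\Pi_1$-definability is not the bookkeeping you suggest in your last sentence but precisely the substantive step supplied by the cited reference.
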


	\begin{fact}\label{finite_present_fact} Let $A$ be a finitely presented structure. Then for every finite tuple $\bar{a}$ generating $A$, $A$ can be presented by a \mbox{finite presentation in the generators $\bar{a}$.}
\end{fact}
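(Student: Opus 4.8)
The plan is to produce the desired presentation explicitly and exhibit an isomorphism, which amounts to a bookkeeping-free version of the classical Tietze-transformation argument. Fix, using finite presentability, a finite presentation $A = \langle \bar{b} \mid \psi(\bar{x}) \rangle$ with $\bar{b} = (b_1, \dots, b_m)$; recall that $\psi$ is a conjunction of finitely many atomic $L$-formulas, i.e. of finitely many equations between $L$-terms. Since $\bar{b}$ generates $A$, each component of the given generating tuple $\bar{a} = (a_1, \dots, a_n)$ is of the form $a_i = t_i(\bar{b})$ for some $L$-term $t_i$; and since by hypothesis $\bar{a}$ also generates $A$, each $b_j$ is of the form $b_j = s_j(\bar{a})$ for some $L$-term $s_j$. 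Writing $\bar{s}(\bar{x}) = (s_1(\bar{x}), \dots, s_m(\bar{x}))$, the candidate presentation is
$$Q = \langle\, \bar{c} \mid \psi(\bar{s}(\bar{c})) \wedge \bigwedge_{i \in [1,n]} c_i = t_i(\bar{s}(\bar{c})) \,\rangle,$$
on a fresh $n$-tuple of generators $\bar{c} = (c_1, \dots, c_n)$. This really is a finite presentation in $n$ generators: substituting the finitely many terms $s_j$ into the finitely many atomic conjuncts of $\psi$ again yields atomic $L$-formulas, and there are only $n$ further equations $c_i = t_i(\bar{s}(\bar{c}))$. It then suffices to build mutually inverse homomorphisms $f \colon A \to Q$ and $g \colon Q \to A$ with $f(\bar{a}) = \bar{c}$ and $g(\bar{c}) = \bar{a}$, after which one identifies $\bar{c}$ with $\bar{a}$.

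For $f$, I would invoke the universal property of the presentation of $A$ in the generators $\bar{b}$: by construction $Q \models \psi(\bar{s}(\bar{c}))$, so the assignment $b_j \mapsto s_j(\bar{c})$ extends to a homomorphism $f \colon A \to Q$, and then $f(a_i) = f(t_i(\bar{b})) = t_i(\bar{s}(\bar{c})) = c_i$ in $Q$ by the remaining defining relations of $Q$. For $g$, I would check that $\bar{a}$ itself satisfies the defining relations of $Q$ inside $A$: since $s_j(\bar{a}) = b_j$ in $A$, the relation $\psi(\bar{s}(\bar{a}))$ reduces to $\psi(\bar{b})$, which holds, and $t_i(\bar{s}(\bar{a})) = t_i(\bar{b}) = a_i$; hence $c_i \mapsto a_i$ extends to a homomorphism $g \colon Q \to A$. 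Finally, $g \circ f$ fixes every $b_j$ (as $g(f(b_j)) = s_j(\bar{a}) = b_j$) and $f \circ g$ fixes every $c_i$, so each composite is the identity on a generating set and hence the identity; thus $f$ is an isomorphism $A \cong Q$ carrying $\bar{a}$ to $\bar{c}$, and transporting the presentation of $Q$ along $f^{-1}$ gives the required finite presentation of $A$ in the generators $\bar{a}$.

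The only genuinely delicate point — more a matter of care than of difficulty — is that one must map \emph{out} of $A$ through its given presentation in $\bar{b}$, since a presentation in $\bar{a}$ is exactly what we are trying to construct and is not yet available; and one must notice that the auxiliary equations $c_i = t_i(\bar{s}(\bar{c}))$ put into $Q$ are precisely what forces $f(a_i) = c_i$ rather than merely $f(a_i) = t_i(\bar{s}(\bar{c}))$. Everything else is a routine verification using the universal property of generators-and-relations presentations for structures in a function-only language (see \cite[Section~9.2]{hodges}). One could alternatively phrase the argument as a chain of Tietze moves — adjoin the generators $\bar{a}$ with defining relations $a_i = t_i(\bar{b})$ to $\langle \bar{b} \mid \psi \rangle$, adjoin the valid relations $b_j = s_j(\bar{a})$, then eliminate the generators $\bar{b}$ via those relations — but the direct construction above keeps the bookkeeping transparent and avoids having to justify each Tietze move in the general structural setting.
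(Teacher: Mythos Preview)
Your argument is correct and is exactly the standard Tietze-type construction one would expect. The paper itself gives no proof of this fact beyond the single line ``This is a well-known general algebraic fact,'' so your explicit construction of $Q$ and the pair of inverse homomorphisms simply fills in what the paper leaves to folklore; there is nothing to compare.
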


	\begin{proof} This is a well-known general algebraic fact.
\end{proof}

	\begin{conclusion}\label{final_conclusion} Let $A$ be computable, Hopfian and finitely \mbox{presented. Then TFAE:}
	\begin{enumerate}[(1)]
	\item $\forall$ $\bar{a}$ s.t. $\langle \bar{a} \rangle_A = A$,  the weak Whitehead problem for $(A, \bar{a})$ is decidable;
	\item $\exists$ $\bar{a}$ s.t. $\langle \bar{a} \rangle_A = A$ such that the weak Whitehead problem for $(A, \bar{a})$ is decidable;
	\item $\exists$ $\bar{a}$ s.t. $\langle \bar{a} \rangle_A = A$ and the $\mrm{Aut}(A)$-orbit of $\bar{a}$ is \mbox{defined by a computable $\Pi_1$ frm;}
	\item $\forall$ $\bar{a}$ s.t. $\langle \bar{a} \rangle_A = A$, the $\mrm{Aut}(A)$-orbit of $\bar{a}$ is defined by a computable $\Pi_1$ frm.
\end{enumerate}
\end{conclusion}

	\begin{proof} This is clear from \ref{final_lemma}, \ref{finite_present_fact} and \ref{the_equivalence_fact}.
\end{proof}

	We recall the definition of $A$ having a decidable weak Whitehead problem.
	
	\begin{definition}\label{def_Whitehead+} Given a f.p. computable structure $A$ we say that $A$ has decidable weak Whitehead problem if for every finite generating tuple $\bar{a}$ of $A$ we have that the set $\{ \alpha(\bar{a}) : \alpha \in \mrm{Aut}(A) \}$ is a computable subset of $A^{n}$, where $n$ is the length of $\bar{a}$.
\end{definition}

	\begin{proof}[Proof of Theorem~\ref{main_theorem}] This is clear by \ref{the_equivalence_fact} and \ref{final_conclusion}.
\end{proof}

\begin{proof}[Proof of Corollary~\ref{second_corollary}] This follows from Theorem~\ref{main_theorem} and the solution of the isomorphism problem for hyperbolic groups \cite{the_hammer} (cf. also \cite[Theorem~2.7(ii)]{ventura_alone}).
\end{proof}

\begin{proof}[Proof of Corollary~\ref{third_corollary}] This follows from Theorem~\ref{main_theorem} and \cite[Theorem~A]{segal_decidability}).
\end{proof}

\section{Groups defined by their types}\label{sec_types}

	In \cite{types} the authors introduced the following notion (cf. \cite[Proposition~2]{types}):
	
	\begin{definition} Let $\mathcal{F}$ be a set of first-order formulas in the variables $\{x_i : i < \omega\}$ such that $\mathcal{F}$ contains all the positive quantifier-free formulas in the group theory language, $\mathcal{F}$ is closed under $\wedge$ and $\vee$, and $\mathcal{F}$ is closed under substitution of variables in words in $\{x_i : i < \omega\}$. We say that $f: A \rightarrow B$ is an $\mathcal{F}$-embedding if, for every $\varphi(\bar{a}) \in \mathcal{F}$ and $\bar{a} \in A^{\mrm{lg}(\bar{x})}$, we have that $A \models \varphi(\bar{x})$ if and only if $B \models \varphi(f(\bar{a}))$. We say that a finitely generated group $A$ (or, more generally, structure) is strongly defined by its $\mathcal{F}$-types iff \mbox{any $\mathcal{F}$-embedding $f: A \rightarrow A$ is an automorphism of $A$.}
	\end{definition} 
	
	\begin{notation} If $\mathcal{F}$ consists of the positive quantifier-free formulas, then an $\mathcal{F}$-embedding is referred to as an $\exists^+$-embedding (as usual in model theory literature).
\end{notation}

	\begin{lemma}\label{the_types_lemma} Let $A = \langle \bar{a} \mid \varphi_1(\bar{a}), ..., \varphi_m(\bar{a}) \rangle$ be as in Notation~\ref{isolation_notation} and let $f: A \rightarrow A$ be an $\exists^+$-embedding. Then $A \models \Theta(f(\bar{a}))$, where $\Theta(\bar{x})$ is as in Lemma~\ref{crucial_lemma}.
\end{lemma}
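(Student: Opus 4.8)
The plan is to unwind the definition of the $\Pi_1$-formula $\Theta(\bar x)$ and verify its two conjuncts for the tuple $f(\bar a)$. Recall that
$$\Theta(\bar{x}) = \psi(\bar{x}) \wedge \bigwedge_{\bar{t} \in \widehat{T}(\bar{a})} \forall \bar{y} \neg (\psi(\bar{y}) \wedge \bigwedge_{i \in [1, n]} x_i = t_{i}(\bar{y})).$$
First I would observe that $\psi(\bar x)$ is a conjunction of atomic positive $L$-formulas, hence a positive quantifier-free formula, and therefore lies in the class preserved by $\exists^+$-embeddings. Since $A \models \psi(\bar a)$ (this is part of the presentation), applying the $\exists^+$-embedding $f$ gives $A \models \psi(f(\bar a))$, so the first conjunct of $\Theta(f(\bar a))$ holds.

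The second conjunct is the substantive part. I need to show that for every $\bar t \in \widehat T(\bar a)$ we have $A \not\models \exists \bar y\,(\psi(\bar y) \wedge \bigwedge_i f(a_i) = t_i(\bar y))$; equivalently, that $\bar t \notin T(f(\bar a))$, i.e. $T(f(\bar a)) \subseteq T(\bar a)$. Here is where I would use that $\exists^+$-embeddings reflect (not just preserve) existential positive sentences. Suppose toward a contradiction that some $\bar t \in \widehat T(\bar a)$ satisfies $\bar t \in T(f(\bar a))$, i.e. $A \models \exists \bar y\,(\psi(\bar y) \wedge \bigwedge_i f(a_i) = t_i(\bar y))$. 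The formula $\exists \bar y\,(\psi(\bar y) \wedge \bigwedge_i x_i = t_i(\bar y))$ — as a function of $\bar x$ — is existential positive quantifier-free, and in the framework of \cite{types} such formulas belong to the relevant class $\mathcal{F}$ (closed under $\wedge$, $\vee$, substitution of word-terms, and existential quantification is built into the $\exists^+$ notion via the definition of $\exists^+$-embedding reflecting quantifier-free positive formulas after substitution). Thus by the defining property of the $\exists^+$-embedding $f$, we would get $A \models \exists \bar y\,(\psi(\bar y) \wedge \bigwedge_i a_i = t_i(\bar y))$, which says exactly $\bar t \in T(\bar a)$, contradicting $\bar t \in \widehat T(\bar a)$. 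Hence $T(f(\bar a)) \subseteq T(\bar a)$ and the second conjunct holds.

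The main obstacle, and the point requiring care, is the precise bookkeeping of which formulas an $\exists^+$-embedding preserves and reflects: an $\exists^+$-embedding $f: A \to A$ is defined to satisfy $A \models \varphi(\bar a) \iff A \models \varphi(f(\bar a))$ for $\varphi$ positive quantifier-free, and one must check that the relevant existential-positive statements reduce to this by absorbing the existential witnesses. Concretely, $A \models \exists \bar y\,(\psi(\bar y)\wedge \bigwedge_i f(a_i) = t_i(\bar y))$ is witnessed by some $\bar u \in A^n$; then $\bar f(a_i) = t_i(\bar u)$ and $A \models \psi(\bar u)$. One wants to pull these back along $f$, but $\bar u$ need not be in the image of $f$, so one instead invokes that the displayed existential-positive formula in $\bar x$ lies in $\mathcal F$ and uses the $\mathcal F$-embedding property directly (as phrased in \cite{types}, $\exists^+$-types are determined, and the relevant closure properties of $\mathcal F$ include closure under such existential statements — this is exactly the content of \cite[Proposition~2]{types}). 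With that identification in hand, the two conjuncts combine to give $A \models \Theta(f(\bar a))$, completing the proof. I would close by remarking that this is the key bridge: combined with Lemma~\ref{crucial_lemma}, it shows that $f(\bar a)$ is in the $\mrm{Aut}(A)$-orbit of $\bar a$ whenever $A$ is weakly Hopfian, which yields Theorem~\ref{the_corrolary}.
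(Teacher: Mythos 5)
Your proof is correct and follows essentially the same route as the paper's: the first conjunct $\psi(f(\bar{a}))$ is immediate from preservation of positive formulas, and the second is obtained by contradiction, pulling the existential positive statement witnessing $\bar{t} \in T(f(\bar{a}))$ back to $\bar{a}$ along the $\exists^+$-embedding to contradict $\bar{t} \in \widehat{T}(\bar{a})$. Your added care about which formulas are reflected (existential positive ones, not merely quantifier-free positive ones, since the witness $\bar{u}$ need not lie in the image of $f$) is precisely the reading of the $\exists^+$-embedding notion from \cite{types} that the paper's own two-line argument implicitly uses, so nothing is missing.
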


	\begin{proof} Let $f(\bar{a}) = \bar{b}$. Suppose not, then there is $\bar{t} \in \widehat{T}(\bar{a})$ such that we have:
	$$A \models \exists \bar{y} (\psi(\bar{y}) \wedge \bigwedge_{i \in [1, n]} b_i = t_i(\bar{y})).$$
But then, as $f: A \rightarrow A$ is an $\exists^+$-embedding and $f(\bar{a}) = \bar{b}$ we have that:
$$A \models \exists \bar{y} (\psi(\bar{y}) \wedge \bigwedge_{i \in [1, n]} a_i = t_i(\bar{y})),$$
but clearly this is a contradiction, as $\bar{t} \in \widehat{T}(\bar{a})$ (recalling Definition~\ref{T(g)_def}).
\end{proof}

	\begin{proof}[Proof of Theorem~\ref{the_corrolary}] This is immediate by \ref{crucial_lemma} and \ref{the_types_lemma}.
\end{proof}	

	Notice that the notion of being strongly defined  by its $\exists^+$-types is the strongest notion considered in \cite{types}, and so our result subsumes all the results of \cite{types}, if the group is assumed to be finitely presented. Furthermore, our result solves many of the questions they pose there (cf. \cite[pp. 5-6]{types}), in the case the group is finitely presented. In particular, the following specific corollary solves \cite[Problem~4]{types}.

	\begin{corollary} Every hyperbolic group is strongly defined  by its $\exists^+$-types.
\end{corollary}

	\begin{proof} It suffices to show that hyperbolic groups are finitely presented and Hopfian. The fact that they are finitely presented is well-known. Concerning being Hopfian, see e.g. \cite{sela}.
\end{proof}

\end{document}